\numberwithin{equation}{section}
\newtheorem{theorem}{Theorem}[section]
\newtheorem{proposition}[theorem]{Proposition}
\theoremstyle{definition}
\theoremstyle{definition} 
\newtheorem{remark}[theorem]{Remark}
\newtheorem{remarks}[theorem]{Remarks}
\newcommand{\bea}{\begin{eqnarray}}
\newcommand{\eea}{\end{eqnarray}}
\newcommand{\beas}{\begin{eqnarray*}}
\newcommand{\eeas}{\end{eqnarray*}}
\newcommand{\beq}{\begin{equation}}
\newcommand{\eeq}{\end{equation}}
\newcommand{\cA}{A}
\newcommand{\cB}{\mathcal B}
\newcommand{\cD}{\mathcal D}
\newcommand{\cE}{\mathcal E}
\newcommand{\cN}{\mathcal N}
\newcommand{\cMH}{\mathcal{MH}}
\newcommand{\cPM}{\mathcal{SM}}
\newcommand{\cSM}{\mathcal{SM}}
\newcommand{\R}{\mathbb R}
\newcommand{\LL}{\mathbb L}
\newcommand{\EE}{\mathbb E}
\newcommand{\FF}{\mathbb F}
\DeclareMathSymbol{\complement}{\mathord}{AMSa}{"7B}
\def\vv<#1>{\langle #1\rangle}
\def\Vv<#1>{\bigl\langle #1\bigr\rangle}
\begin{document}


\title[Incompressible Two-Phase Flows with Phase Transitions]
{On the Qualitative Behaviour of Incompressible Two-Phase Flows with Phase Transitions:\\
The Case of Equal Densities}

\author[J.~Pr\"uss]{Jan Pr\"uss}
\address{Institut f\"ur Mathematik \\
         Martin-Luther-Universit\"at Halle-Witten\-berg\\
         D-60120 Halle, Germany}
\email{jan.pruess@mathematik.uni-halle.de}

\author[G.~Simonett]{Gieri Simonett}
\address{Department of Mathematics\\
         Vanderbilt University \\
         Nashville, TN~37240, USA}
\email{simonett@math.vanderbilt.edu}

\author[R.~Zacher]{Rico Zacher}
\address{Institut f\"ur Mathematik \\
         Martin-Luther-Universit\"at Halle-Witten\-berg\\
         D-60120 Halle, Germany}
\email{rico.zacher@mathematik.uni-halle.de}


\thanks{This work was partially supported by a grant from the Simons Foundation (\#245959 to Gieri Simonett).}

\begin{abstract}
The study of the basic model for incompressible two-phase flows with phase transitions in the case of equal densities, initiated in the paper Pr\"uss,
Shibata, Shimizu, and Simonett \cite{PSSS11}, is continued here with a stability analysis of equilibria and results on asymptotic behaviour of global solutions. The results parallel those for the thermodynamically consistent Stefan problem with surface tension obtained in Pr\"uss, Simonett, and Zacher \cite{PSZ10}.
\end{abstract}

\maketitle

{\small\noindent
{\bf Mathematics Subject Classification (2010):}\\
Primary: 35R35, 35K55, 35B35, Secondary: 35Q30, 76D45, 80A22. \vspace{0.1in}\\
{\bf Key words:} Two-phase Navier-Stokes equations, surface tension, phase transitions, entropy, semiflow, stability, generalized principle of linearized stability, convergence to equilibria} \vspace{0.1in}\\



\section{Introduction}
In this paper we study a sharp interface model for two-phase flows with surface tension
undergoing phase transitions. The model is based on
conservation of mass, momentum and energy, and hence is physically exact. It further employs
the standard constitutive law of Newton for the stress tensor, Fourier's law for heat conduction,
and it is thermodynamically consistent.

Suppose that two fluids, fluid$_1$  and fluid$_2$, occupy the
regions $\Omega_1(t)$ and $\Omega_2(t)$, respectively,
with $\bar\Omega_1(t)\cup\bar\Omega_2(t)=\bar\Omega$.
Let $\Gamma(t)=\partial\Omega_1(t)$ be a sharp interface that separates the fluids.
Across the interface $\Gamma(t)$ certain physical parameters, such as the
density, viscosity, heat capacity and the heat conductivity, experience jumps. We assume that the interface is ideal in the sense that it is {\em immaterial,} which means that it has no capacity for mass or energy except surface tension.

In more detail, let $\Omega\subset \R^{n}$ be a bounded domain of class $C^{3-}$ with $n\geq2$.
We further assume that $\Gamma(t)\cap\partial \Omega=\emptyset$, which implies that
no boundary contact can occur.
In the following we let
\begin{itemize}
\item $u_i$ denote the velocity field in $\Omega_i(t)$,
\item $\pi_i$ the pressure field in $\Omega_i(t)$,
\item $T_i$ the stress tensor in $\Omega_i(t)$,
\item  $D_i=(\nabla u_i +[\nabla u_i]^{\sf T})/2$ the rate of strain tensor in $\Omega_i(t)$,
\item $\theta_i$  the (absolute) temperature field in $\Omega_i(t)$,
\item $\nu_\Gamma$ the outer normal  of $\Omega_1(t)$,
\item  $u_\Gamma$ the velocity field of $\Gamma(t)$,
\item  $V_\Gamma=u_\Gamma\cdot\nu_\Gamma$ the normal velocity of $\Gamma(t)$,
\item  $H_\Gamma=H(\Gamma(t))=-{\rm div}_\Gamma \nu_\Gamma$ the sum of the principal curvatures of
$\Gamma(t)$, and
\item  $[\![v]\!]=v_2-v_1$ the jump of a quantity $v$ across $\Gamma(t)$.
\end{itemize}
Here  the sign of the curvature $H_\Gamma$ is negative at a point $x\in\Gamma$ if
$\Omega_1\cap B_r(x)$ is convex, for some sufficiently small $r>0$. Thus if $\Omega_1$ is a ball,
i.e.\ $\Gamma=S_R(x_0)$, then $H_\Gamma=-(n-1)/R$.

Several quantities are derived from the  {\em specific free energies} $\psi_i(\theta)$ as follows:
\begin{itemize}
\item $\epsilon_i(\theta)= \psi_i(\theta)+\theta\eta_i(\theta)$ is the
internal energy in phase $i$.
\item $\eta_i(\theta) =-\psi_i^\prime(\theta)$ is the entropy,
\item $\kappa_i(\theta)= e^\prime_i(\theta)=-\theta\psi_i^{\prime\prime}(\theta)>0$ is the  heat capacity,
\item $l(\theta)=\theta[\![\psi^\prime(\theta)]\!]=-\theta[\![\eta(\theta)]\!]$ is the latent heat.
\end{itemize}
Further $d_i(\theta)>0$ denotes the coefficient of heat conduction in Fourier's law, $\mu_i(\theta)>0$ the viscosity in Newton's law, $\rho:=\rho_1=\rho_2=1$ the constant density,
and $\sigma>0$ the constant coefficient of surface tension.

In the sequel we drop the index $i$, as there is no danger of confusion; we just keep in mind that the
physical quantities depend on the phases.

By  the {\em Incompressible two-phase flow with phase transition} we mean the following
free boundary problem:
find a family of closed compact hypersurfaces $\{\Gamma(t)\}_{t\geq0}$ contained in $\Omega$
and appropriately smooth functions $u:\R_+\times \bar{\Omega} \to \R^n$, and $\pi,\theta:\R_+\times\bar{\Omega}\rightarrow\R$ such that
\begin{equation}
\label{i2pp}
\left\{
\begin{aligned}
\partial_t u +u\cdot\nabla u -{\rm div}\, T &=0 && \text{in}&&\Omega\setminus\Gamma(t)\\
T=\mu(\theta)(\nabla u + [\nabla u]^{\sf T}) -\pi I,\quad
{\rm div }\, u &=0 &&\text{in}&&\Omega\setminus\Gamma(t)\\
\kappa (\theta)(\partial_t \theta+u\cdot\nabla\theta)-{\rm div}(d(\theta)\nabla \theta)
-T:\nabla u &=0 &&\text{in}&&\Omega\setminus\Gamma(t)\\
u=\partial_\nu \theta &=0 && \text{on}&&\partial \Omega\\
[\![u]\!]=[\![\theta]\!]&=0 && \text{on}&& \Gamma(t)\\
[\![T\nu_\Gamma]\!]+\sigma H_\Gamma \nu_\Gamma&=0 &&\text{on}&&\Gamma(t)\\
[\![\psi(\theta)]\!]+\sigma H_\Gamma &=0 &&\text{on}&&\Gamma(t)\\
-[\![d(\theta)\partial_\nu \theta]\!]+ l(\theta) (V_\Gamma- u\cdot \nu_\Gamma)&=0 &&\text{on}&&\Gamma(t)\\
 \Gamma(0)=\Gamma_0,\quad u(0,x)=u_0(x), \quad \theta(0,x)&=\theta_0(x) &&\text{in}&&\Omega.
\end{aligned}\right.
\end{equation}
This model  has been recently proposed by Anderson et al.\ \cite{Gur07},
see also the monographs by Ishii \cite{Ish75} and Ishii and Takashi~\cite{IsTa06},
and the derivation in Section 2 of the recent paper \cite{PSSS11}.
It has been shown in \cite{PSSS11} that the model is thermodynamically consistent in the sense that in
the absence of exterior forces and external heat sources, the total energy is preserved and the total entropy is nondecreasing. It is in some sense the simplest sharp interface model for incompressible Newtonian two-phase flows taking into account phase transitions driven by temperature.

There is a large literature on isothermal incompressible Newtonian two-phase flows without phase transitions, and also on the two-phase Stefan problem with surface tension modeling temperature driven phase transitions. On the other hand, mathematical work on two-phase flow problems including phase transitions are rare. In this direction, we only know the papers by Hoffmann and Starovoitov \cite{HoSt98a,HoSt98b} dealing with a simplified two-phase flow model, and Kusaka and Tani \cite{KuTa99,KuTa02} which is two-phase for temperature but only one phase is moving. The papers of DiBenedetto and Friedman \cite{DBFr86} and DiBenedetto and O'Leary\cite{DBOL93} deal with weak solutions of conduction-convection problems with phase change. However, none of these papers deals with models which are consistent with thermodynamics.

It is the purpose of this paper to present a qualitative analysis of problem \eqref{i2pp} in the framework of $L_p$-theory. We  discuss the induced local semiflow and study the stability properties of the equilibria. These are the same as those for the thermodynamically consistent two-phase Stefan problem with surface tension, and even more, also their stability properties turn out to be the same. This heavily depends on the fact that the densities of the two phases are assumed to be equal; in this case the problem is {\em temperature dominated}.

In a forthcoming paper we will consider the case where the densities are not equal; then the
solution behavior is different, as the interfacial mass flux has a direct impact on the velocity field of the fluid, inducing so-called {\em Stefan currents}. The velocity field is no longer continuous across the interface which leads to  different analytic properties of the model. We call this case {\em velocity dominated}.

It has been shown in \cite{PSSS11} that the total energy
\begin{equation}
\label{energy}
{\sf E}:={\sf E}(u,\theta,\Gamma):=\frac{1}{2}\int_{\Omega\setminus\Gamma} |u|_2^2\,dx
+ \int_{\Omega\setminus\Gamma} \epsilon(\theta)\,dx +\sigma |\Gamma|,
\end{equation}
is preserved along smooth solutions, while the total entropy
\begin{equation}
\label{entropy}
\Phi(\theta,\Gamma)=\int_{\Omega\setminus\Gamma} \eta(\theta)\,dx
\end{equation}
is strictly increasing along nonconstant smooth solutions. By similar arguments as in \cite{PSZ10}, it can further be shown that the equilibria
of (\ref{i2pp}) are precisely the critical points of the entropy functional with
prescribed energy, and that a necessary condition for such a point $e_*=(u_*,\theta_*,\Gamma_*)$ to be a local maximum of the entropy functional
with prescribed energy is that $\Gamma_*$ is connected and that the stability
condition {\bf (S)}, see Theorem \ref{linstabtheorem} below, is satisfied.

The plan for this paper - which builds on \cite{PSSS11} and \cite{PSZ10} - is as follows.
Our approach is based on the so-called {\em direct mapping method} where the problem with moving interface is transformed to a problem with fixed
domain, resulting in a quasilinear parabolic evolution problem with a dynamic boundary condition on a domain with fixed interface. The main result on well-posedness of the transformed problem is taken from  \cite{PSSS11}
and is stated in Section 2.  The linear stability properties of the equilibria are derived in Section 3. It turns out that generically the equilibria are normally hyperbolic. They are always unstable if the disperse phase $\Omega_1$ is not connected. If both phases are connected we find the same stability condition,
 condition {\bf (S)} in Theorem \ref{linstabtheorem} below, as in Pr\"uss, Simonett and Zacher \cite{PSZ10}, see also Pr\"uss and Simonett \cite{PrSi08}.  As the equilibria are normally hyperbolic we may use a variant of the generalized principle of linearized stability, see Pr\"uss, Simonett and Zacher \cite{PSZ09}, to prove nonlinear stability or instability. Combining this method with the Lyapunov functional we are able to show that a solution which does not develop singularities exists globally and its orbit is relatively compact in the state manifold. If such a solution contains a stable equilibrium in its limit set, then it is shown that it converges to this equilibrium.
\bigskip\\

\section{The Local Semiflow}
\noindent
{\bf (i) Local Existence}\\
The basic result for local well-posedness of problem \eqref{i2pp}
in an $L_p$-setting, stated in \cite[Theorem 5.1]{PSSS11}, is the following. Here $P_\Gamma=I-\nu_\Gamma\otimes \nu_\Gamma$ denotes the orthogonal projection onto the tangent space of
$\Gamma$.
\begin{theorem}
\label{wellposed}
Let $p>n+2$,  $\sigma>0$.
Suppose $\psi_i\in C^3(0,\infty)$, $\mu_i,d_i\in C^2(0,\infty)$ such that
$$\kappa_i(s)=-s\psi_i^{\prime\prime}(s)>0,\quad \mu_i(s)>0,\quad  d_i(s)>0,\quad s\in(0,\infty),\;
i=1,2.$$
Assume the {\bf regularity conditions}
$$ (u_0,\theta_0)\in [W^{2-2/p}_p(\Omega\setminus\Gamma_0)\cap C(\bar{\Omega})]^{n+1},\quad \Gamma_0\in W^{4-3/p}_p,$$
where $\Omega\subset\R^n$ is a bounded domain with boundary $\partial\Omega\in C^{3-}$,\\
 the {\bf  compatibility conditions}
\begin{align*}
&{\rm div}\, u_0=0 \; \mbox{ in } \Omega\setminus\Gamma_0,\quad u_0=\partial_\nu \theta_0 = 0 \mbox{ on } \partial\Omega,\\
&P_{\Gamma_0}[\![\mu(\theta_0)(\nabla u_0+[\nabla u_0]^{\sf T})]\!] =0 \; \mbox{ on } \Gamma_0,\\
&[\![\psi(u_0)]\!]+\sigma H_{\Gamma_0}=0\;\mbox{ on } \Gamma_0,
\quad [\![d(\theta_0)\partial_{\nu_{\Gamma_0}} u_0]\!]\in W^{2-6/p}_p(\Gamma_0),
\end{align*}
and the {\bf well-posedness condition}
$$\theta_0>0 \mbox{ on }  \bar\Omega, \quad  l(u_0)\neq0 \mbox{ on } \Gamma_0.$$

Then there exists a unique $L_p$-solution of problem \eqref{i2pp}  on
some possibly small but nontrivial time interval $J=[0,\tau]$.
\end{theorem}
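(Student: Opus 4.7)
The plan is to use the direct mapping method to reduce \eqref{i2pp} to a quasilinear parabolic problem on a fixed domain, establish maximal $L_p$-regularity for its linearization at the initial data, and then close via a Banach fixed point argument. Concretely, I would parametrize the moving interface as a normal graph over $\Gamma_0$,
\begin{equation*}
\Gamma(t)=\{x+h(t,x)\nu_{\Gamma_0}(x):\, x\in\Gamma_0\},
\end{equation*}
and lift this to a Hanzawa diffeomorphism $\Theta_h$ of $\bar\Omega$ that equals the identity near $\partial\Omega$, using a cutoff in a tubular neighborhood of $\Gamma_0$. Pulling back $(u,\pi,\theta)$ by $\Theta_h$ transforms \eqref{i2pp} into a quasilinear parabolic system on the fixed domain $\Omega\setminus\Gamma_0$ in the unknowns $(\bar u,\bar\pi,\bar\theta,h)$, with coefficients depending smoothly and nonlinearly on $h$ and its tangential gradient. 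The assumption $\Gamma_0\in W^{4-3/p}_p$ with $p>n+2$ puts $\Gamma_0$ into $C^{3-}$ and places $h$ in a Banach algebra closed under the relevant multiplications.

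Freezing coefficients at $t=0$ yields the linearization: a two-phase Stokes system for $(\bar u,\bar\pi)$ with a dynamic stress jump containing $\sigma H'(\Gamma_0)h\,\nu_{\Gamma_0}$, a two-phase heat equation for $\bar\theta$, and the pair of transmission conditions
\begin{equation*}
[\![\psi'(\theta_0)]\!]\bar\theta+\sigma H'(\Gamma_0)h=g_1,\qquad l(\theta_0)\partial_t h-[\![d(\theta_0)\partial_\nu\bar\theta]\!]-l(\theta_0)\bar u\cdot\nu_{\Gamma_0}=g_2
\end{equation*}
on $\Gamma_0$, coming respectively from the Gibbs--Thomson and the Stefan relations. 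The main obstacle is to show that this coupled linear system has maximal $L_p$-regularity in an anisotropic Sobolev--Slobodeckij scale. The decisive ingredient is the well-posedness condition $l(\theta_0)\neq 0$: it is exactly what makes the Stefan equation solvable for $\partial_t h$, and in the joint boundary symbol with the Gibbs--Thomson condition it produces a parabolic pseudodifferential operator for $h$ of the correct order. A Newton-polygon and localization analysis, together with $\cR$-sectoriality of the bulk Stokes and heat operators (available thanks to the strict positivity of $\mu(\theta_0),\,d(\theta_0),\,\kappa(\theta_0)$), verifies the Lopatinskii--Shapiro condition for the combined free-boundary operator and yields the required maximal regularity.

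Denoting this linearization by $L$ and the associated maximal regularity space by $\EE(J)$, fix a reference function $z_*\in\EE(J)$ extending the initial data and rewrite the full problem as a fixed point equation $z=z_*+L^{-1}\cN(z)$, with $\cN$ collecting all nonlinear corrections from the Hanzawa pullback and from the constitutive nonlinearities $\psi,\mu,d,\kappa,l$. The compatibility conditions ensure that the residual $L(z-z_*)$ has vanishing trace at $t=0$; the regularity conditions ensure $z_*\in\EE(J)$; and $p>n+2$ provides the embedding $\EE(J)\hookrightarrow C(J;C^1(\bar\Omega\setminus\Gamma_0))$ needed to control the Nemytskii operators in $\cN$ as locally Lipschitz maps. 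For $\tau$ sufficiently small, $z\mapsto z_*+L^{-1}\cN(z)$ contracts on a small ball in $\EE([0,\tau])$ around $z_*$, producing the unique $L_p$-solution on $J=[0,\tau]$.
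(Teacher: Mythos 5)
Your outline follows essentially the same route as the source of this result: Theorem~\ref{wellposed} is not proved in the present paper but quoted from \cite[Theorem 5.1]{PSSS11}, whose proof is exactly the direct mapping method you describe, namely a Hanzawa reduction to a fixed domain, maximal $L_p$-regularity of the linearized two-phase Stokes/heat system coupled through the Gibbs--Thomson and Stefan conditions (with $l(\theta_0)\neq0$ entering the boundary symbol as you indicate), and a contraction argument in $\EE([0,\tau])$ for small $\tau$. The one technical point where your sketch deviates is that the interface is parameterized as a normal graph over a smooth reference hypersurface $\Sigma$ chosen close to $\Gamma_0$, rather than over $\Gamma_0$ itself, since $\Gamma_0\in W^{4-3/p}_p$ is not regular enough to serve as the reference manifold for the Hanzawa transform; with that adjustment your argument is the standard one.
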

\noindent{\bf (ii) The Local Semiflow}\\
We follow here the approach introduced in K\"ohne, Pr\"uss and Wilke \cite{KPW10} for the isothermal incompressible two-phase Navier-Stokes problem without phase transitions and in Pr\"uss, Simonett and  Zacher \cite{PSZ10} for the Stefan problem with surface tension.

Recall that the closed $C^2$-hypersurfaces contained in $\Omega$ form a $C^2$-manifold,
which we denote by $\cMH^2(\Omega)$.
The charts are the parameterizations over a given hypersurface $\Sigma$, and the tangent
space consists of the normal vector fields on $\Sigma$.
We define a metric on $\cMH^2(\Omega)$ by means of
$$d_{\cMH^2}(\Sigma_1,\Sigma_2):= d_H(\cN^2\Sigma_1,\cN^2\Sigma_2),$$
where $d_H$ denotes the Hausdorff metric on the compact subsets of $\R^n$ and
$\cN^2\Sigma=\{p,\nu_\Sigma(p),\nabla_\Sigma \nu_\Sigma(p)):\,p\in \Sigma\}$ the second order bundle
of $\Sigma\in \cMH^2(\Omega)$. This way $\cMH^2(\Omega)$ becomes a Banach manifold of class $C^2$.
\medskip

As an ambient space for the
state manifold $\cSM$ of problem  \eqref{i2pp}  we consider
the product space $C(\bar{\Omega})^{n+1}\times \cMH^2(\Omega)$,
due to continuity of velocity, temperature and curvature.

We  then define the state manifold $\cSM$ as follows.
\begin{eqnarray}
\label{phasemanif0}
\cSM:=&&\hspace{-0.5cm}\Big\{(u,\theta,\Gamma)\in C(\bar{\Omega})^{n+1}\times \cMH^2:
 (u,\theta)\in W^{2-2/p}_p(\Omega\setminus\Gamma)^{n+1},\, \Gamma\in W^{4-3/p}_p,\nonumber\\
 && {\rm div}\, u=0\; \mbox{ in }\; \Omega,\quad \theta>0 \mbox{ in } \bar{\Omega},\;\quad u=\partial_\nu\theta =0\; \mbox{ on } \partial\Omega,\nonumber\\
&& P_\Gamma[\![\mu(\theta) D\nu_\Gamma]\!] =0,\quad [\![\psi(\theta)]\!]+\sigma H_\Gamma=0
\mbox{ on } \Gamma,\\
&&l(\theta)\neq0\mbox{ on } \Gamma,
\quad [\![d\partial_\nu \theta]\!] \in W^{2-6/p}_p(\Gamma)\Big\},\nonumber
\end{eqnarray}
Charts for these manifolds are obtained by the charts induced by $\cMH^2(\Omega)$,
followed by a Hanzawa transformation.

Applying Theorem \ref{wellposed} and re-parameterizing the interface repeatedly,
we see that (\ref{i2pp}) yields a local semiflow on $\cSM$.

\begin{theorem}
\label{semiflow}
Let $p>n+2$, $\sigma>0$, and suppose $\psi_i\in C^3(0,\infty)$, $\mu_i,d_i\in C^2(0,\infty)$ such that
$$\kappa_i(s)=-s\psi_i^{\prime\prime}(s)>0,\quad \mu_i(s)>0,\quad  d_i(s)>0,\quad s\in(0,\infty),
\; i=1,2.$$
Then problem (\ref{i2pp}) generates a local semiflow
on the state manifold $\cSM$. Each solution $(u,\theta,\Gamma)$ exists on a maximal time
interval $[0,t_*)$, where $t_*=t_*(u_0,\theta_0,\Gamma_0)$.
\end{theorem}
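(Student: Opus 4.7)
The plan is to deduce \thmref{semiflow} from \thmref{wellposed} in three steps: first, verify that any $L_p$-solution $(u,\theta,\Gamma)$ on $[0,\tau]$ produced by \thmref{wellposed} satisfies $(u(t),\theta(t),\Gamma(t))\in\cSM$ for every $t\in[0,\tau]$; second, combine this with uniqueness of $L_p$-solutions and a Hanzawa-type restart to obtain the flow property together with continuous dependence on the initial data; third, extract a maximal interval of existence by a standard Zorn-type argument. The backbone throughout is the $L_p$-maximal regularity for the transformed quasilinear parabolic problem on a fixed domain established in \cite{PSSS11}.

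For the first step, the maximal regularity of the transformed system shows that the time traces $(u(t),\theta(t))$ belong to $W^{2-2/p}_p(\Omega\setminus\Gamma(t))^{n+1}$ and that $\Gamma(t)\in W^{4-3/p}_p$ for each $t\in[0,\tau]$, while the embedding $W^{2-2/p}_p\hookrightarrow C(\bar\Omega)$ holds because $p>n+2$. The conditions $\mathrm{div}\,u=0$, $u=0$, $\partial_\nu\theta=0$, the tangential stress balance $P_\Gamma[\![\mu(\theta)D\nu_\Gamma]\!]=0$, and the Gibbs--Thomson relation $[\![\psi(\theta)]\!]+\sigma H_\Gamma=0$ all persist as traces of the PDE at every time $t$. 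The jump $[\![d(\theta)\partial_\nu\theta]\!]$ lies in $W^{2-6/p}_p(\Gamma(t))$ by taking the spatial trace of the natural anisotropic space for the temperature equation. Positivity $\theta>0$ on $\bar\Omega$ and non-degeneracy $l(\theta)\neq 0$ on $\Gamma$ are open conditions, hold at $t=0$ by assumption, and persist on $[0,\tau]$ after possibly shrinking $\tau$.

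For the flow property, given $s\in(0,\tau)$ the previous step gives $z(s):=(u(s),\theta(s),\Gamma(s))\in\cSM$, so \thmref{wellposed} applied with initial datum $z(s)$ produces a unique $L_p$-solution on an interval $[s,s+\tau']$. Uniqueness then yields the semigroup identity $\phi(t+s,z_0)=\phi(t,\phi(s,z_0))$ whenever both sides are defined. Continuous dependence of $\phi$ in the topology of $\cSM$ is obtained by pulling the problem back to a fixed reference configuration using Hanzawa transformations parameterized over a nearby hypersurface, and then invoking the continuous dependence of the solution operator of the quasilinear fixed-domain problem, just as in \cite{KPW10,PSZ10}. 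Finally, the set of existence times for fixed initial data is directed under the restart operation, so its supremum defines a unique maximal existence time $t_*=t_*(u_0,\theta_0,\Gamma_0)\in(0,\infty]$ and a maximally defined solution $(u,\theta,\Gamma)$ on $[0,t_*)$ with values in $\cSM$.

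The main technical obstacle is the first step: one must ensure that \emph{all} of the defining conditions of $\cSM$ are genuinely attained at every positive time, and not merely in an almost-everywhere sense. This concerns in particular the higher-order interfacial trace $[\![d(\theta)\partial_\nu\theta]\!]\in W^{2-6/p}_p(\Gamma(t))$ and the Gibbs--Thomson condition, whose validity at each $t$ relies on the precise maximal regularity statement of the transformed problem in \cite{PSSS11} together with a careful identification of time traces in the anisotropic Slobodeckij spaces used there. Once this bootstrapping is in place, the state variable can be restarted in $\cSM$ without loss of regularity, and the semiflow property and maximal interval follow by the standard pasting argument.
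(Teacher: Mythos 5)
Your proposal is correct and follows essentially the same route as the paper, which simply invokes Theorem~\ref{wellposed} together with repeated Hanzawa re-parameterizations of the interface (in the spirit of \cite{KPW10,PSZ10}) to obtain the local semiflow on $\cSM$. Your write-up merely spells out the details (persistence of the defining conditions of $\cSM$ at positive times via maximal regularity and trace theory, restart plus uniqueness for the flow property, and the standard pasting argument for the maximal interval) that the paper leaves implicit.
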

Note that the pressure does not occur explicitly as a variable in the
local semiflow, as the latter is only formulated in terms of the temperature $\theta$,
the velocity field $u$, and the free boundary $\Gamma$.
The pressure $\pi$ is determined for each time $t$ from
$(u,\theta,\Gamma)$ by means of the weak transmission problem
\begin{align*}
\big(\nabla\pi|\nabla\phi\big)_{L_2(\Omega)} &= \big(2{\rm div}(\mu(\theta)D)-u\cdot\nabla u|\nabla\phi\big)_{L_2(\Omega)},\quad \phi\in H^1_{p^\prime}(\Omega),\\
[\![\pi]\!] &= \sigma H_\Gamma + 2[\![\mu(\theta)D\nu_\Gamma\cdot \nu_\Gamma]\!]\quad \mbox{ on } \Gamma.
\end{align*}
Concerning such transmission problems we refer to \cite[Scetion 8]{KPW10}.

\section{Linear Stability of Equilibria}
\noindent
{\bf 1.}\,
As shown in \cite[Section 3]{PSSS11}, the equilibria $(u_*,\pi_*,\theta_*,\Gamma_*)$ of \eqref{i2pp} consist of zero velocities $u_*$, constant pressures $\pi_*$ in the phases,
constant temperatures $\theta_*$, and $\Omega_1$ is a ball $\Omega_1=B_{R_*}(x_*)\subset\Omega$ in case $\Omega_1$ is connected, and  a union of nonintersecting balls of equal radii otherwise.
We assume here that the balls do not touch the outer boundary $\partial\Omega$, to avoid the contact angle problem, and we also assume that the balls do not touch each other. We are not able to handle the latter case as the interface $\Gamma_*=\partial\Omega_1$ will then not be a $C^2$-manifold.
We call such equilibria {\em non-degenerate}.
The temperature $\theta_*$ and the pressure jump $[\![\pi_*]\!]$ are related to $R_*$
via the curvature $H_{\Gamma_*}$ through the relation
\begin{equation}
\label{pi-H}
[\![\psi(\theta_*)]\!]=-\sigma H_{\Gamma_*}= \frac{(n-1)\sigma}{R_*},
\quad [\![\pi_*]\!]=-[\![\psi(\theta_*)]\!].
\end{equation}
In the sequel we only consider non-degenerate equilibria and denote the set of such equilibria
by $\cE$, i.e.,
\begin{equation*}
\cE=\left\{(0,\theta_*,\Gamma_*):\Gamma_* =\bigcup_{k=1}^m \Gamma^k_*,
\quad \Gamma^k_*=S_{R_\ast}(x^k_*)\right\},
\end{equation*}
with $[\![\pi_*]\!],$ $\theta_*$ and $R_*$ determined by \eqref{pi-H}.
According to \eqref{energy} the
total energy at an equilibrium $(0,\theta_*,\Gamma_*)$ is then given by
\begin{equation}
\label{energy-equilibrium}
\varphi(\theta_*):=E(0,\theta_*,\Gamma_*)=
\int_{\Omega\setminus\Gamma_*}\epsilon(\theta_*)\,dx +\sigma|\Gamma_*|.
\end{equation}

By employing the Hanzawa transformation, see \cite[Section 2]{PSSS11}, one shows that
the fully linearized problem at an equilibrium is given by
\begin{equation}
\label{elin-u-theta}
\left\{
\begin{aligned}
\partial_t u-\mu_* \Delta u +\nabla\pi & =f_u &&\text{in} && \Omega\setminus{\Gamma_*}\\
{\rm div}\, u &=f_d &&\text{in} &&\Omega\setminus{\Gamma_*}\\
u &=0 && \text{on} &&\partial\Omega\\
[\![u]\!] &= 0 &&\text{on} && {\Gamma_*}\\
 -[\![T\nu]\!] +\sigma (\cA_* h)\nu &=g_u &&\text{on} &&{\Gamma_*}\\
\kappa_*\partial_t\vartheta -d_*\Delta \vartheta &=f_\theta &&\text{in} && \Omega\setminus{\Gamma_*}\\
\partial_\nu\vartheta &=0 &&\text{on} && \partial\Omega\\
[\![\vartheta]\!]& =0 &&\text{on} &&{\Gamma_*} \\
 l_*\vartheta -\sigma\cA_*h &= g_\theta &&\text{on} && {\Gamma_*}\\
(l_*/\theta_*)(\partial_t h-u\cdot\nu) -[\![d_*\partial_\nu\vartheta]\!] &= g_h
&&\text{on} && {\Gamma_*}\\
u(0)=u_0,\;\vartheta(0)=\vartheta_0,\: h(0)&=h_0, && &&
\end{aligned}
\right.
\end{equation}
with
\begin{equation}
\label{H-prime}
\mu_*=\mu(\theta_*),\;\; \kappa_*=\kappa(\theta_*), \;\; l_*=l(\theta_*),
\;\;\cA_* =-H^\prime(0)=-{(n-1)}/{R_*^2} -\Delta_{\Gamma_*},
\end{equation}
 where $\Delta_{\Gamma_*}$ denotes the Laplace-Beltrami operator of $\Gamma_*$,
and  $\vartheta=(\theta-\theta_*)/\theta_*$ is the relative temperature.
\medskip\\
\noindent
It follows from the maximal regularity results in \cite{PSSS11} that the operator $\LL$ defined by the left hand side of \eqref{elin-u-theta}
is an isomorphism from $\EE(J)$ into $\R(J)\subset\FF(J)\times \gamma\EE$,
where $\R(J)$ is determined by the natural compatibility conditions.
Here the function spaces $\EE(J)$, $\gamma\EE(J)$ and $\FF(J)$,
with $J=[0,a]$ an interval, are defined as follows:
$$
\EE(J):= \big\{(u,\pi,q,\theta,h)\in
\EE_1(J) \times \EE_2(J) \times \EE_3(J) \times \EE_4(J) \times
\EE_5(J): q=[\![\pi]\!]\big\},
$$
where
{\allowdisplaybreaks
\begin{align*}
&\EE_1(J):= \{u\in H_p^1(J; L_p(\Omega))^n\cap
L_p(J; H_p^2(\Omega\setminus \Gamma_*))^n :\, u=0\ {\rm on}\ \partial\Omega,
\ \ [\![u]\!]=0\},\\
&\EE_2(J):= L_p(J; \dot H^1_p(\Omega\setminus \Gamma_*)),\\
&\EE_3(J):= W_p^{1/2-1/2p}(J; L_p(\Gamma_*))
\cap L_p(J; W^{1-1/p}_p(\Gamma_*)),\\
&\EE_4(J):= \{\theta\in H_p^1(J; L_p(\Omega))\cap
L_p(J; H^2_p(\Omega\setminus \Gamma_*)):\, \partial_\nu\theta=0\
{\rm on}\ \partial\Omega,\ \ [\![\theta]\!]=0\},\\
&\EE_5(J):= W_p^{3/2-1/2p}(J; L_p(\Gamma_*))
\cap W_p^{1-1/2p}(J; H^2_p(\Gamma_*))
\cap L_p(J; W^{4-1/p}_p(\Gamma_*)).
\end{align*}}
\noindent
The time-trace space $\gamma\EE(J)$  is given by
\begin{equation*}
\gamma\EE(J)=\big\{(u_0,\vartheta_0,h_0)\in
\big(W^{2-2/p}_p(\Omega\setminus\Gamma_*)\cap C(\bar{\Omega})\big)^{n+1}
\times W^{4-3/p}_p(\Gamma_*)\big\},
\end{equation*}
while the space of data is
$
\FF(J):= \big\{(f_u,f_d,g_u,f_\theta, g_\theta,g_h)\in \prod_{j=1}^6 \FF_j(J)\big\},
$
where
{\allowdisplaybreaks
\begin{align*}
&\FF_1(J):= L_p(J; L_p(\Omega))^n,\\
&\FF_2(J):= H_p^{1}(J; \dot H_p^{-1}(\Omega)) \cap L_p(J; H_p^1(\Omega\setminus\Gamma_*)),\\
&\FF_3(J):= W_p^{1/2-1/2p}(J; L_p(\Gamma_*))^n \cap L_p(J; W^{1-1/p}_p(\Gamma_*))^n,\\
&\FF_4(J):= L_p(J; L_p(\Omega)),\\
&\FF_5(J):= W_p^{1-1/2p}(J; L_p(\Gamma_*))\cap L_p(J; W^{2-1/p}_p(\Gamma_*)),\\
&\FF_6(J):= W_p^{1/2-1/2p}(J; L_p(\Gamma_*))\cap L_p(J; W^{1-1/p}_p(\Gamma_*)).
\end{align*}}
If the time derivatives $\partial_t$ are replaced by $\partial_t +\omega$, with $\omega>0$ sufficiently large, then this result is also true for $J=\R_+$.
\medskip\\
\noindent
{\bf 2.}\,
 We introduce a functional analytic setting as follows.
Set $$X_0=L_{p,\sigma}(\Omega)^n\times L_p(\Omega)\times W^{2-2/p}_p(\Gamma_*),$$
where the subscript $\sigma$ means solenoidal, and define the operator $L$ by
\begin{equation*}
 L(u,\theta,h)=
\big(-\mu_*\Delta u +\nabla\pi, -(d_*/\kappa_*)\Delta \vartheta,
-u\cdot\nu-(\theta_*/l_*)[\![d_*\partial_{\nu}\vartheta]\!]\big).
\end{equation*}
To define the domain $D(L)$ of $L$, we set
\begin{eqnarray*}
X_1= \big\{(u,\vartheta,h)\in \big(H^2_p(\Omega\setminus\Gamma_*)\cap C(\bar\Omega)\big)^{n+1}\times W^{4-1/p}_p(\Gamma_*): \\
\quad {\rm div}\, u=0\; \mbox{ in }\; \Omega\setminus\Gamma_*,\;\:
u=\partial_\nu\vartheta=0\;\mbox{ on }\;\partial\Omega\big\},
\end{eqnarray*}
and
\begin{eqnarray*}
D(L)= \big\{(u,\vartheta,h)\in X_1:  [\![P_{*}\mu_* D\nu]\!]=0,\;\;
l_*\vartheta -\sigma\cA_* h=0\mbox{ on } \; \Gamma_*, \\
\,[\![d_*\partial_{\nu}\vartheta]\!] \in W^{2-2/p}_p(\Gamma_*) \big\},
\end{eqnarray*}
where $P_*=P_{\Gamma_*}$ denotes the orthogonal projection onto the tangent space of $\Gamma_*$.
Here $\pi$ is determined as the solution of the weak transmission problem
\begin{equation*}
\begin{split}
(\nabla\pi|\nabla\phi)_2 &=(\mu_*\Delta u|\nabla \phi)_2,\quad \phi\in \dot{H}^1_{p^\prime}(\Omega),\\
\qquad [\![\pi]\!] &=-\sigma \cA_*h+2[\![\mu_* D\nu\cdot\nu]\!].
\end{split}
\end{equation*}
We refer to \cite[Section 8]{KPW10} for a detailed analysis of such transmission problems.
\\
The linearized problem can be rewritten as an abstract evolution problem in $X_0$,
\begin{equation}\label{alp} \dot{z} + Lz =f,\quad t>0,\quad z(0)=z_0,\end{equation}
where $z=(u,\vartheta,h)$, $f=(f_u,f_\theta, g_h)$, $z_0=(u_0,\vartheta_0,h_0)$,
provided $f_d=g_u=g_\theta=0$.
As the terms $u\cdot \nu$ and $\sigma A_*h$ are of lower order
we may deduce maximal $L_p$-regularity of (\ref{alp}) from that of the decoupled system (cf.\ \cite[Section 6]{KPW10} and \cite[Theorem 4.3]{PSZ10}) by a perturbation argument.
We can then conclude that $-L$ generates an analytic $C_0$-semigroup in $X_0$;
cf.\ Pr\"uss \cite[Proposition 1.1]{Pru03}.
\medskip\\
\noindent
{\bf 3.\,The eigenvalue problem.}
Since the embedding $X_1\hookrightarrow X_0$ is compact, the semigroup $e^{-Lt}$ as well as the resolvent $(\lambda+L)^{-1}$ of $-L$ are compact. Therefore, the spectrum $\sigma(L)$ of $L$ consists only of countably many eigenvalues of finite algebraic multiplicity and is independent of $p\in (1,\infty)$.
Therefore it is enough to consider the case $p=2$.
In the following, we will use the notation
\begin{equation*}
\begin{split}
&(u|v)_\Omega:=(u|v)_{L_2(\Omega)}:=\int_\Omega u\bar v\,dx,
\quad u,v\in L_2(\Omega),\\
&(g|h)_{\Gamma_*}\!:=(g|h)_{L_2(\Gamma_*)}\!:=\int_{\Gamma_*} g\bar h\,ds,
\quad g,h\in L_2(\Gamma_*),
\end{split}
\end{equation*}
for the $L_2$ inner product in $\Omega$ and $\Gamma_*$, respectively.
Moreover, we set $|v|_\Omega=(v|v)^{1/2}_{\Omega}$ and
$|g|_{\Gamma_*}=(g|g)^{1/2}_{\Gamma_*}$.
The eigenvalue problem for $-L$ reads as follows:
\begin{equation}
\label{ev-u}
\left\{
\begin{aligned}
\lambda u-\mu_* \Delta u +\nabla\pi &=0 &&\text{in} && \Omega\setminus{\Gamma_*}\\
{\rm div}\, u &=0 &&\text{in} &&\Omega\setminus{\Gamma_*}\\
u &=0 && \text{on} &&\partial\Omega\\
[\![u]\!] &= 0 &&\text{on} && {\Gamma_*}\\
 -[\![T\nu]\!] +\sigma (\cA_* h)\nu &= 0 &&\text{on} &&{\Gamma_*},\\
\end{aligned}
\right.
\end{equation}
\smallskip
\begin{equation}
\hspace{-1.7cm}
\label{ev-theta}
\left\{
\begin{aligned}
\kappa_*\lambda\vartheta -d_*\Delta \vartheta &= 0 &&\text{in} && \Omega\setminus{\Gamma_*}\\
\partial_\nu\vartheta &=0 &&\text{on} && \partial\Omega\\
[\![\vartheta]\!]& =0 &&\text{on} &&{\Gamma_*} \\
 l_*\vartheta -\sigma\cA_*h &= 0 &&\text{on} && {\Gamma_*}\\
(l_*/\theta_*)(\lambda h-u\cdot\nu) -[\![d_*\partial_\nu\vartheta]\!] &=0 &&\text{on} && {\Gamma_*}.
\end{aligned}
\right.
\end{equation}
\goodbreak
\noindent
We are now ready to formulate the main result of this section.
\begin{theorem} \label{linstabtheorem}
Let $L$ denote the linearization at  $(0,\theta_*,\Gamma_*)\in\cE$ as defined above. Suppose $l_*\neq0$.
Then $-L$ generates a compact analytic $C_0$-semigroup in $X_0$ which has maximal $L_p$-regularity. The spectrum of $L$ consists only of eigenvalues of finite algebraic multiplicity. Moreover, the following assertions are valid.
\vspace{2mm}
\begin{itemize}
\item[{\bf (i)}] If $\Gamma_*$ is connected and the stability condition
\begin{equation*}
{\bf (S)}\qquad s:=s(e_*):=\frac{\sigma(n-1)}{R_*^2}-\frac{l_*^2|\Gamma_*|}{\theta_*(\kappa_*|1)_\Omega}
 \le 0
\end{equation*}
holds, then all eigenvalues $\lambda\neq 0$ of $-L$ have negative real part.
\vspace{2mm}
\item[{\bf (ii)}] The stability condition (S) is equivalent to $\varphi^\prime(\theta_*)\leq0$, where the function $\varphi$ is defined in \eqref{energy-equilibrium}.
\vspace{2mm}
\item[{\bf (iii)}] If $\Gamma_*=\cup_{k=1}^m\Gamma^k_*$ and $s>0$, then $-L$ has precisely $m$ positive eigenvalues, and  precisely $(m-1)$ positive eigenvalues if $s\le 0$.
\vspace{2mm}
\item[{\bf (iv)}] $\lambda=0$ is an eigenvalue of $L$ with geometric multiplicity $(mn+1)$.
It is semi-simple if $s\neq 0$.
\vspace{2mm}
\item[{\bf (v)}] Let $e_*=(0,\theta_*,\Gamma_*)\in \cE$ be an equilibrium. Then
in a neighborhood of $e_*$ the set of equilibria $\cE$ forms a $(mn+1)$-dimensional $C^1$-manifold.
Moreover, the kernel $N(L)$ of $L$ is isomorphic to the tangent space $T_{e_*}\cE$ of $\cE$ at $e_*$.
\end{itemize}
\vspace{2mm}
Consequently, $(0,\theta_*,\Gamma_*)\in\cE$ is normally stable if and only if $s<0$ and $\Gamma_*$ is connected, and normally hyperbolic if and only if $s>0$, or $\Gamma_*$ is disconnected and $s\neq 0$.
\end{theorem}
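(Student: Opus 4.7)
The plan is to combine the semigroup theory established in Section 2 with a spectral analysis of \eqref{ev-u}--\eqref{ev-theta} based on energy-type identities, together with a geometric description of $\cE$. That $-L$ generates a compact analytic $C_0$-semigroup on $X_0$ with maximal $L_p$-regularity follows from the decoupled maximal regularity result in \cite{PSSS11} and the perturbation argument sketched in Section 2, the coupling terms $u\cdot\nu$ and $\sigma\cA_*h$ being of lower order; compactness of the resolvent follows from the compact embedding $X_1\hookrightarrow X_0$. Thus $\sigma(L)$ consists of isolated eigenvalues of finite algebraic multiplicity, independent of $p\in(1,\infty)$, and it suffices to set $p=2$.

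For the spectral assertions, let $\lambda$ be an eigenvalue of $-L$ with eigenfunction $(u,\vartheta,h)$. I would test the momentum equation against $\bar u$, the heat equation against $\theta_*\bar\vartheta$, and the kinematic condition $(l_*/\theta_*)(\lambda h-u\cdot\nu)=[\![d_*\partial_\nu\vartheta]\!]$ against $\sigma\overline{\cA_* h}$, then use $[\![u]\!]=[\![\vartheta]\!]=0$, $P_*[\![\mu_*D\nu]\!]=0$, $[\![T\nu]\!]=\sigma(\cA_*h)\nu$ and $l_*\vartheta=\sigma\cA_* h$ to combine the three identities, after integration by parts, into a relation of the schematic form
\begin{equation*}
\lambda\bigl(|u|_\Omega^2 + \theta_*(\kappa_*\vartheta|\vartheta)_\Omega + \sigma(\cA_* h|h)_{\Gamma_*}\bigr) + 2(\mu_*|Du|^2)_\Omega + \theta_*(d_*|\nabla\vartheta|^2)_\Omega = 0
\end{equation*}
upon taking real parts. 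The Laplace--Beltrami operator on a sphere of radius $R_*$ has eigenvalues $\ell(\ell+n-2)/R_*^2$, so $\cA_*=-(n-1)/R_*^2-\Delta_{\Gamma_*}$ acts as $-(n-1)/R_*^2$ on constants, as zero on degree-one spherical harmonics (translations of each ball), and as a strictly positive operator on higher harmonics. Decomposing $h$ accordingly, $(\cA_* h|h)_{\Gamma_*}$ is strictly positive outside the $m$-dimensional \emph{bubble} subspace spanned by $\{\mathbf{1}_{\Gamma_*^k}\}_{k=1}^m$, and eliminating $\vartheta$ via $l_*\vartheta=\sigma\cA_* h$ reduces the sign question to a finite-dimensional Schur complement on that subspace. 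In the connected case $(m=1)$ the stability condition $(\mathbf{S})$ is exactly the statement that this scalar Schur complement is nonpositive, yielding $\mathrm{Re}\,\lambda\le 0$ for every $\lambda\ne 0$ and hence (i); in the disconnected case a signature count on the $m\times m$ bubble block produces the $m$ (respectively $m-1$) unstable eigenvalues asserted in (iii), the $m-1$ differential modes being always unstable and the collective mode unstable precisely when $s>0$.

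For (ii), parameterize the single-ball equilibria by $\theta_*$, with $R_*$ determined via $[\![\psi(\theta_*)]\!]=(n-1)\sigma/R_*$ and hence $R'(\theta_*)=-R_*^2 l_*/((n-1)\sigma\theta_*)$. Using $[\![\epsilon]\!]=[\![\psi]\!]-l$ and differentiating \eqref{energy-equilibrium} directly yields
\begin{equation*}
\varphi'(\theta_*) = (\kappa_*|1)_\Omega + |\Gamma_*|R'(\theta_*)\,l_* = (\kappa_*|1)_\Omega - \frac{|\Gamma_*|R_*^2 l_*^2}{(n-1)\sigma\theta_*} = \frac{R_*^2(\kappa_*|1)_\Omega}{(n-1)\sigma}\,s,
\end{equation*}
so $\sign\varphi'(\theta_*)=\sign s$. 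For (iv)--(v), the equilibria near $e_*$ are smoothly parameterized by the $m$ centers $x_*^k\in\Omega$ and the common radius $R_*$, giving a $(mn+1)$-dimensional $C^1$-manifold. Its tangent vectors lie in $N(L)$: translations of the $k$-th ball produce $u=\vartheta=0$ with $h|_{\Gamma_*^k}$ a degree-one spherical harmonic and $h=0$ elsewhere, while a joint radial perturbation produces $h=c\mathbf{1}_{\Gamma_*}$, the forced constant $\vartheta=-\sigma(n-1)c/(l_*R_*^2)$, and $u=0$. A routine verification that no further solutions of $Lz=0$ exist gives $\dim N(L)=mn+1$. Semi-simplicity of $\lambda=0$ when $s\ne 0$ follows by solving $Lz=z_0$ for $z_0\in N(L)$ and observing that its projection onto the collective bubble direction carries a Fredholm obstruction proportional to $s$, so $N(L^2)=N(L)$.

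The main technical obstacle is the finite-dimensional reduction on the bubble modes, where the indefinite direction of $\cA_*$ couples with the heat equation through $l_*\vartheta=\sigma\cA_*h$; the sign of the resulting scalar Schur complement must be matched with $s$ via a careful integration over $\Omega$, and this single sign then governs both the count in (iii) and the semi-simplicity in (iv). The threshold case $s=0$, where the manifold of equilibria ceases to be normally hyperbolic, is the most delicate, as one must rule out nontrivial Jordan chains without benefit of a strict sign on the Schur complement.
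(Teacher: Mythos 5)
Your treatment of (i), (ii), (iv) and (v) follows essentially the paper's route: the real-part identity you write is exactly \eqref{evid}, your computation of $\varphi'(\theta_*)$ is correct (and more explicit than the paper, which defers to \cite{PSSS11}), and your kernel description and the parameterization of $\cE$ by the $m$ centers and the common radius match the paper's argument for (iv)--(v). Two points in (i) and (iv) still need the details the paper supplies: to exclude nonreal eigenvalues with $\mathrm{Re}\,\lambda\ge 0$ one must also take imaginary parts separately and invoke Korn's inequality, and your ``scalar Schur complement'' on the bubble direction has to be computed through the two mean-value relations $\lambda\bar h=-\bar j$ and $(l_*/\theta_*)|\Gamma_*|\,\bar j=\lambda(\kappa_*|1)_\Omega\bar\vartheta$, obtained by integrating the kinematic equation over $\Gamma_*$ and the heat equation over $\Omega$; this is precisely the passage from \eqref{evid} to \eqref{evid2}, and the Gibbs--Thomson relation $l_*\vartheta=\sigma\cA_*h$ alone (which only controls the trace of $\vartheta$) does not yield it. Similarly, in (iv) the obstruction proportional to $s$ only kills the coefficient $\alpha_0$ of the collective mode; the translation coefficients require the additional energy argument forcing $D=0$, $\vartheta$ constant, $u=0$.

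The genuine gap is (iii). The energy/Schur-complement reasoning gives only non-existence statements; since $L$ is not self-adjoint on $X_0$, indefiniteness of the quadratic form on the $m$-dimensional span of $\{{\sf e}_k\}$ does not by itself produce any positive eigenvalue of $-L$, let alone exactly $m$ or $m-1$ of them. The paper's proof of (iii) rests on machinery entirely absent from your sketch: the Neumann-to-Dirichlet operators $N_\lambda^S$ and $N_\lambda^H$ for \eqref{NDStokes} and \eqref{NDdiffusion} together with their positivity and decay properties (Propositions \ref{NSlambda} and \ref{NHlambda}), the reduction of the eigenvalue problem to the operator pencil $B_\lambda h=\lambda T_\lambda h+\sigma\cA_*h=0$ in \eqref{Tlambda}, the limit computation $\lim_{\lambda\to0}\lambda T_\lambda{\sf e}=c_*|\Gamma_*|/(\kappa_*|1)_\Omega\,{\sf e}$ identifying $B_0$ and its $m$ (resp.\ $m-1$) negative eigenvalues, the lower bound \eqref{asymptNH} showing $B_\lambda$ is positive semidefinite for large $\lambda$ (itself proved by a weak-compactness contradiction argument), and finally the continuity/compactness argument at $\lambda_0=\sup\{\lambda>0:\,B_\mu \text{ not positive semidefinite on }(0,\lambda]\}$ converting the sign change into a zero eigenvalue of $B_{\lambda_0}$, i.e.\ a positive eigenvalue of $-L$. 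Your ``signature count on the $m\times m$ bubble block'' asserts the conclusion of this analysis but supplies none of its steps, so neither the existence of unstable eigenvalues in the unstable or disconnected cases nor the exact count $m$ versus $m-1$ is established. (A minor point: semi-simplicity at $s=0$ is not claimed in the theorem---and in fact fails, see the remark following it---so the threshold case you single out as most delicate does not have to be handled.)
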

\smallskip
\begin{proof}
\noindent
{\bf (i)} Suppose  $\lambda$ with ${\rm Re}\; \lambda\geq0$ is an eigenvalue of $-L$
with eigenfunction $(u,\vartheta, h)$. Taking the inner product of the eigenvalue problem \eqref{ev-u} with $\bar u$ and integrating over $\Omega$ we get
\begin{align*}
0&=\lambda |u|_\Omega^2 -({\rm div}\; T|u)_\Omega
=\lambda |u|_\Omega^2 +\int_\Omega T:\nabla \bar{u}\,dx +\int_{\Gamma_*} (T_2\nu\bar{u}_2-T_1\nu\bar{u}_1)\,ds\\
&=\lambda |u|_\Omega^2 + 2|\mu_*^{1/2}D|_\Omega^2 +([\![T\nu]\!]|u)_{\Gamma_*}\\
&=\lambda |u|_\Omega^2 + 2|\mu_*^{1/2}D|_\Omega^2 +\sigma\bar{\lambda} (\cA_* h|h)_{\Gamma_*} +\sigma(\cA_*h|j)_{\Gamma_*},
\end{align*}
since $[\![u]\!]=0$, $[\![T\nu]\!]=\sigma (\cA_*h)\nu$ and $u\cdot\nu = \lambda h+j$ with
$(l_*/\theta_*)j=-[\![d_*\partial_\nu\vartheta]\!]$.
On the other hand, the inner product of the equation for $\vartheta$ with  $\bar\vartheta $ and an integration by parts leads to
\begin{align*}
0&= \lambda|\kappa_*^{1/2}\vartheta|_\Omega^2 + |d_*^{1/2}\nabla\vartheta|_\Omega^2 +([\![d_*\partial_\nu\vartheta]\!]|\vartheta)_{\Gamma_*}\\
&= \lambda|\kappa_*^{1/2}\vartheta|_\Omega^2 + |d_*^{1/2}\nabla\vartheta|_\Omega^2
-\sigma(j|\cA_*h)_{\Gamma_*}/\theta_*,
\end{align*}
where we employed the relations $(l_*/\theta_*)j=-[\![d_*\partial_\nu\vartheta]\!]$ and $l_*\vartheta=\sigma \cA_*h$.
Adding these identities and taking real parts  yields the important relation
\begin{equation}
\label{evid}
\begin{split}
0&={\rm Re}\,\lambda |u|_\Omega^2 + 2|\mu_*^{1/2}D|_\Omega^2
+\sigma\,{\rm Re}\,\lambda\, (\cA_* h|h)_{\Gamma_*} \\
&\quad +\theta_*\Big({\rm Re}\,\lambda|\kappa_*^{1/2}\vartheta|_\Omega^2
+ |d_*^{1/2}\nabla\vartheta|_\Omega^2\Big).
\end{split}
\end{equation}
On the other hand, if $\beta:={\rm Im}\, \lambda\neq0$, then taking imaginary parts separately we get with $a=\sigma(\cA_*h|j)_{\Gamma_*}$
\begin{equation*}
\begin{split}
0&=\beta|u|_\Omega^2-\sigma\beta (\cA_* h|h)_{\Gamma_*} + {\rm Im}\,a, \\
0&=\beta \theta_*|\kappa_*^{1/2}\vartheta|_\Omega^2 +{\rm Im}\, a.
\end{split}
\end{equation*}
Hence
$$ \sigma (\cA_* h|h)_{\Gamma_*} = |u|_\Omega^2-\theta_*|\kappa_*^{1/2}\vartheta|_\Omega^2.$$
Inserting this identity into \eqref{evid} leads to
$$0=2{\rm Re}\,\lambda |u|_\Omega^2 + 2|\mu_*^{1/2}D|_\Omega^2
+ \theta_*|d_*^{1/2}\nabla\theta|_\Omega^2,$$
which by \eqref{ev-u}-\eqref{ev-theta} (and Korn's inequality in case ${\rm Re}\,\lambda =0$)
 shows that if  $\lambda$ is an eigenvalue of $-L$ with ${\rm Re}\, \lambda\geq 0$ then $\lambda$ is real.

Supposing that $\lambda>0$ is an eigenvalue, we decompose  $\vartheta=\vartheta_0 +\bar{\vartheta}$, $h=h_0+\bar{h}$ and $j=j_0+\bar{j}$, where
$$ \bar{\vartheta}= (\kappa_*|\vartheta)_\Omega/(\kappa_*|1)_\Omega,
\quad \bar{h}=(h|1)_{\Gamma_*}/|{\Gamma_*}|,\quad \bar{j}=(j|1)_{\Gamma_*}/|{\Gamma_*}|$$
are weighted means. Then
\begin{equation*}
\begin{split}
|\kappa_*^{1/2}\vartheta|_\Omega^2 = |\kappa_*^{1/2}\vartheta_0|_\Omega^2\! +(\kappa_*|1)_\Omega \bar{\vartheta}^2,\;\;
|h|_{\Gamma_*}^2 =|h_0|_{\Gamma_*}^2 \!+ |{\Gamma_*}|\, \bar{h}^2,\;
|j|_{\Gamma_*}^2 =|j_0|_{\Gamma_*}^2 \!+ |{\Gamma_*}|\, \bar{j}^2.
\end{split}
\end{equation*}
Therefore \eqref{evid} becomes
\begin{equation}
\label{evid1}
\begin{split}
0&=\lambda |u|_\Omega^2 + 2|\mu_*^{1/2}D|_\Omega^2 +\sigma\lambda (\cA_* h_0|h_0)_{\Gamma_*}\\
&\quad
+\theta_*\Big(\lambda|\kappa_*^{1/2}\vartheta_0|_\Omega^2 + |d_*^{1/2}\nabla\vartheta_0|_\Omega^2\Big)
+ \lambda\theta_*(\kappa_*|1)_\Omega\bar{\vartheta}^2-\lambda\sigma\frac{n-1}{R_*^2}|{\Gamma_*}|\bar{h}^2.
\end{split}
\end{equation}
We further have
$$\lambda \int_{{\Gamma_*}} h\, ds = \int_{\Gamma_*} (u\cdot\nu -j)\,ds= -\int_{\Gamma_*} j\, ds$$
hence $\lambda \bar{h}=-\bar{j}$.
Also, the identity
$$(l_*/\theta_*)\int_{\Gamma_*} j\,ds = -\int_{\Gamma_*} [\![d_*\partial_\nu\vartheta]\!]\,ds
= \int_\Omega d_*\Delta \vartheta\,dx = \lambda\int_\Omega\kappa_*\vartheta\, dx$$
implies
$(l_*/\theta_*)|{\Gamma_*}| \bar{j}=\lambda(\kappa_*|1)_\Omega\bar{\vartheta}.$
Thus  \eqref{evid1} becomes
\begin{equation}
\label{evid2}
\begin{split}
0&=\lambda |u|_\Omega^2 + 2|\mu_*^{1/2}D|_\Omega^2 +\sigma\lambda (\cA_* h_0|h_0)_{\Gamma_*}\\
&\quad
+\theta_*\Big(\lambda|\kappa_*^{1/2}\vartheta_0|_\Omega^2 + |d_*^{1/2}\nabla\vartheta_0|_\Omega^2\Big)
+ \lambda|{\Gamma_*}|\Big\{\frac{l_*^2|{\Gamma_*}|}{\theta_*(\kappa_*|1)_\Omega}-\frac{\sigma(n-1)}{ R_*^2}\Big\}\bar{h}^2.
\end{split}
\end{equation}
As $\cA_*$ is positive semidefinite on functions with mean zero
if $\Gamma_*$ is connected, in this case $-L$ has no positive eigenvalues if the {\em stability condition}
\begin{equation}
\label{sc}
\frac{l_*^2|{\Gamma_*}|}{\theta_*(\kappa_*|1)_2 } - \frac{(n-1)\sigma}{ R_*^2}\ge 0
\end{equation}
is satisfied. This is the same condition we found for the thermodynamically consistent Stefan problem with surface tension; see \cite{PSZ10} and \cite{PrSi08}.

\medskip
\noindent
{\bf (ii)} The assertion follows immediately from the results in \cite[Section 3]{PSSS11}.

\medskip
\noindent
{\bf (iii)} On the other hand, if the stability condition does not hold or if ${\Gamma_*}$ is disconnected, then there is always a positive eigenvalue. To prove this we proceed as follows.
Solve the Stokes problem
\begin{equation}
\label{NDStokes}
\left\{
\begin{aligned}
\lambda u-\mu_* \Delta u +\nabla\pi &=0 &&\text{in} && \Omega\setminus{\Gamma_*}\\
{\rm div}\, u  & =0 &&\text{in} && \Omega\setminus{\Gamma_*}\\
u &=0 &&\text{on} && \partial\Omega \\
[\![u]\!] &= 0 &&\text{on} && {\Gamma_*}\\
 -[\![T\nu]\!] &= g\nu &&\text{on} && {\Gamma_*}\\
\end{aligned}
\right.
\end{equation}
and define the Neumann-to-Dirichlet operator $N_\lambda^S$ for the Stokes problem
by $N_\lambda^Sg:=u\cdot\nu.$
Similarly, solve the heat problem
\begin{equation}
\label{NDdiffusion}
\left\{
\begin{aligned}
\kappa_*\lambda\vartheta -d_*\Delta \vartheta &=0 &&\text{in}&& \Omega\setminus{\Gamma_*}\\
\partial_\nu\vartheta &=0 &&\text{on} &&\partial\Omega\\
[\![\vartheta]\!]&=0 &&\text{on} && {\Gamma_*}\\
-[\![d_*\partial_\nu\vartheta]\!] &= g &&\text{on} && {\Gamma_*}\\
\end{aligned}
\right.
\end{equation}
to obtain $\vartheta =N_\lambda^H g$,
where $N_\lambda^H$ denotes the Neumann-to Dirichlet operator for the heat problem.
In the following, we use the same notation for $\vartheta$ and its restriction to $\Gamma_*$.
\medskip

Suppose that $\lambda>0$ is an eigenvalue with eigenfunction $(u,\vartheta,h)$.
Choosing $g=-\sigma \cA_*h$ in \eqref{NDStokes} we obtain
$u\cdot\nu=-N_\lambda^S \sigma \cA_*h.$
Next we solve the heat problem \eqref{NDdiffusion} with $g=(l_*/\theta_*)(u\cdot\nu-\lambda h)$,
yielding
$$\vartheta= -(l_*/\theta_*)N_\lambda^H\big(N_\lambda^S\sigma\cA_\ast h +\lambda h).$$
This implies with the linearized Gibbs-Thomson law $l_*\vartheta=\sigma\cA_*h$
the relationship
$$-(l_*^2/\theta_*)N_\lambda^H\big(N_\lambda^S\sigma\cA_\ast h +\lambda h\big)= \sigma \cA_* h,$$
hence
$$\lambda h +  [N_\lambda^S + ((l_*^2/\theta_*)N_\lambda^H)^{-1}]\sigma A_* h =0.$$
Setting
$$ T_\lambda :=[N_\lambda^S + ((l_*^2/\theta_*)N_\lambda^H)^{-1}]^{-1}$$
we arrive at the equation
\begin{equation}
\label{Tlambda}
B_\lambda h:=\lambda T_\lambda h +\sigma \cA_* h =0.
\end{equation}
$\lambda>0$ is an eigenvalue of $-L$ if and only if \eqref{Tlambda} admits a nontrivial solution. We consider this problem in $L_2({\Gamma_*})$.
Then $\cA_*$ is selfadjoint and
$$\sigma(\cA_* h|h)_{\Gamma_*} \geq -\frac{\sigma(n-1)}{R_*^2} |h|^2_{\Gamma_*}.$$
On the other hand, we will see below that $N_\lambda^H$ and $N_\lambda^S$ are selfadjoint and positive semidefinite on $L_2({\Gamma_*})$ and hence $T_\lambda$ is selfadjoint and positive semidefinite as well. Moreover, since $\cA_*$ has compact resolvent, the operator $B_\lambda$ has compact resolvent as well, for each $\lambda>0$. Therefore the spectrum of $B_\lambda$ consists only of eigenvalues which, in addition, are real. We intend to prove that in case either $\Gamma_*$ is disconnected or the stability condition does not hold, $B_{\lambda_0}$ has $0$ as an eigenvalue, for some $\lambda_0>0$.

To proceed we need properties of the relevant Neumann-to-Dirichlet operators.
\begin{proposition}
\label{NSlambda}
The Neumann-to-Dirichlet operator $N_\lambda^S$ for the Stokes problem \eqref{NDStokes} has the following properties in $L_2(\Gamma_*)$.\\
{\bf (i)} \, If $u$ denotes the solution of \eqref{NDStokes}, then
$$ (N_\lambda^Sg|g)_{\Gamma_*} = \lambda |u|^2_{\Omega} + 2\int_\Omega \mu_*|D|_2^2\, dx,
\quad  g\in L_2(\Gamma_*),\;\lambda\geq0.$$
{\bf (ii)} \, For each $\alpha\in(0,1/2)$ there is a constant $C>0$ such that
$$(N_\lambda^S g|g)_{\Gamma_*}\geq \frac{(1+\lambda)^\alpha}{C}|N_\lambda^Sg|_{\Gamma_*}^2,\quad g\in L_2(\Gamma_*),\; \lambda\geq0.$$
In particular,
$$ |N_\lambda^S|_{\cB(L_2(\Gamma_*))}\leq \frac{C}{(1+\lambda)^\alpha}, \quad \lambda\geq0.$$
{\bf (iii)} \, Let $\Gamma_*^k$ denote the components of $\Gamma_*$ and let ${\sf e}_k$ be the function which is one on $\Gamma_*^k$, zero elsewhere. Then $(N_\lambda^Sg|{\sf e}_k)_{\Gamma_*}=0$ for each $k$; in particular $N_\lambda^Sg$ has mean value zero for each $g\in L_2(\Gamma_*)$.
Moreover, with ${\sf e}=\sum_k {\sf e}_k$ we have $N_\lambda^S{\sf e}=0$ and $(N_\lambda^Sg|{\sf e})_{L_2(\Gamma_*)}=0$ for all $g\in L_2(\Gamma_*)$.
\end{proposition}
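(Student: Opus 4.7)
The plan is to establish the three parts in order, combining standard energy identities for the Stokes system with trace and interpolation inequalities; (ii) will be the main obstacle.

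For (i), I would test the momentum balance $\lambda u-\mu_*\Delta u+\nabla\pi=0$ against $\bar u$ in $L_2(\Omega)$ and integrate by parts phase by phase. Writing $-\mu_*\Delta u+\nabla\pi=-{\rm div}\,T$ with $T=\mu_*(\nabla u+[\nabla u]^{\sf T})-\pi I$, the boundary term on $\partial\Omega$ vanishes because $u=0$, and the two interface contributions combine, via $[\![u]\!]=0$, into $([\![T\nu]\!]|u)_{\Gamma_*}$. The bulk term becomes $2\int_\Omega\mu_*|D|_2^2\,dx$, since ${\rm div}\,u=0$ kills the pressure piece and $D$ is symmetric. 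Substituting $[\![T\nu]\!]=-g\nu$ and recognising $u\cdot\nu=N_\lambda^Sg$ on $\Gamma_*$ yields the claimed identity; in particular $(N_\lambda^Sg|g)_{\Gamma_*}\ge 0$.

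For (iii), since $\Omega_1$ is a disjoint union of balls whose boundaries are exactly the components $\Gamma_*^k$, the divergence theorem applied to the solenoidal field $u$ on each ball gives $\int_{\Gamma_*^k}u\cdot\nu\,ds=0$, which is $(N_\lambda^Sg|{\sf e}_k)_{\Gamma_*}=0$; summation produces the corresponding statement for ${\sf e}$. To see that $N_\lambda^S{\sf e}=0$, I would exhibit the explicit solution $(u,\pi)=(0,\pi_*)$ where $\pi_*$ is piecewise constant in each phase with $[\![\pi_*]\!]=1$ on $\Gamma_*$; direct substitution shows this pair solves \eqref{NDStokes} with $g={\sf e}$, and uniqueness of the Stokes solution forces $u\cdot\nu\equiv 0$.

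The core difficulty is the estimate in (ii). Combining (i) with Korn's inequality (which applies because $u=0$ on $\partial\Omega$) gives
\[
(N_\lambda^Sg|g)_{\Gamma_*}\;\ge\; c\bigl(\lambda|u|_\Omega^2+\|u\|_{H^1(\Omega\setminus\Gamma_*)}^2\bigr).
\]
On the other hand, for any $\varepsilon\in(0,1/2)$, the trace embedding $H^{1/2+\varepsilon}(\Omega\setminus\Gamma_*)\hookrightarrow L_2(\Gamma_*)$ and the real-interpolation bound $\|u\|_{H^{1/2+\varepsilon}}\le C\|u\|_{L_2}^{1/2-\varepsilon}\|u\|_{H^1}^{1/2+\varepsilon}$ yield
\[
|N_\lambda^Sg|_{\Gamma_*}^2\le C\,\|u\|_{L_2}^{1-2\varepsilon}\|u\|_{H^1}^{1+2\varepsilon}.
\]
Setting $\alpha=(1-2\varepsilon)/2\in(0,1/2)$, I would rewrite $\lambda^\alpha\|u\|_{L_2}^{1-2\varepsilon}=(\lambda\|u\|_{L_2}^2)^{(1-2\varepsilon)/2}$ and invoke Young's inequality with conjugate exponents $2/(1-2\varepsilon)$ and $2/(1+2\varepsilon)$ to bound
\[
\lambda^\alpha|N_\lambda^Sg|_{\Gamma_*}^2\le C\bigl(\lambda\|u\|_{L_2}^2+\|u\|_{H^1}^2\bigr)\le C\,(N_\lambda^Sg|g)_{\Gamma_*}
\]
for $\lambda\ge 1$; the regime $\lambda\le 1$ is handled by the boundedness of $(1+\lambda)^\alpha$ combined with the plain trace estimate. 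This gives the first inequality of (ii), and the operator-norm bound follows at once by Cauchy--Schwarz applied to $(N_\lambda^Sg|g)_{\Gamma_*}$. The restriction $\alpha<1/2$ is unavoidable, reflecting the borderline failure of the $L_2$-trace map on $H^{1/2}$; matching this sharp exponent through Young's inequality is the delicate point of the argument.
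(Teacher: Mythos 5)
Your argument is correct and follows exactly the route the paper indicates in its one-line proof: the divergence theorem (energy identity) for (i), trace and interpolation theory combined with Korn's inequality for (ii), and the divergence theorem with ${\rm div}\,u=0$ (plus the explicit solution $(0,\pi_*)$ with $[\![\pi_*]\!]=1$ and uniqueness for $N_\lambda^S{\sf e}=0$) for (iii). You have simply filled in the details that the paper leaves to the reader, including the Young-inequality bookkeeping that produces the exponent $\alpha\in(0,1/2)$.
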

\begin{proof}
The first assertion follows from the divergence theorem. The second assertion is a consequence of
trace and interpolation theory, combined with Korn's inequality.
The last assertion is implied with ${\rm div}\, u=0$ by the divergence theorem.
\end{proof}

\begin{proposition}
\label{NHlambda}
The Neumann-to-Dirichlet operator $N_\lambda^H$ for the diffusion problem \eqref{NDdiffusion} has the following properties in $L_2(\Gamma_*)$.\\
{\bf (i)} \,  If $\vartheta$ denotes the solution of \eqref{NDdiffusion}, then
$$ (N_\lambda^Hg|g)_{\Gamma_*} = \lambda |\sqrt{\kappa_*}\vartheta|^2_{\Omega}
+ |\sqrt{d_*}\nabla\vartheta|_{\Omega}^2 ,\quad g\in L_2(\Gamma_*),\;\lambda > 0. $$
{\bf (ii)} \, For each $\alpha\in(0,1/2)$ and $\lambda_0>0$ there is a constant $C>0$ such that
$$(N_\lambda^H g|g)_{\Gamma_*}\geq \frac{\lambda^\alpha}{C}|N_\lambda^Hg|_{\Gamma_*}^2,\quad g\in L_2(\Gamma_*),\; \lambda\geq\lambda_0.$$
In particular, $N_\lambda^H$ is injective, and
$$ |N_\lambda^H|_{\cB(L_2(\Gamma_*))}\leq \frac{C}{\lambda^\alpha}, \quad \lambda\geq\lambda_0.$$
{\bf (iii)} \, On $L_{2,0}(\Gamma_*)= \{g\in L_2(\Gamma_*):\, (g|{\sf e})_{\Gamma_*}=0\}$, we even have
$$(N_\lambda^H g|g)_{\Gamma_*}\geq \frac{(1+\lambda)^\alpha}{C}|N_\lambda^Hg|_{\Gamma_*}^2,\quad g\in L_{2,0}(\Gamma_*),\; \lambda>0,$$
and
$$ |N_\lambda^H|_{\cB(L_{2,0}(\Gamma_*))}\leq \frac{C}{(1+\lambda)^\alpha}, \quad \lambda>0.$$
In particular, for $\lambda=0$, \eqref{NDdiffusion} is solvable if and only if $(g|{\sf e})_{\Gamma_*}=0$, and then the solution is unique up to a constant.
\end{proposition}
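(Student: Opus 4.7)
The plan is to follow the pattern of Proposition~\ref{NSlambda} for the Stokes problem, with the key additional input that for the heat equation Poincar\'e's inequality is not automatic---no Dirichlet condition pins down $\vartheta$---and must be extracted from a $\kappa_*$-weighted mean-zero constraint tied to the restriction $g\in L_{2,0}(\Gamma_*)$ in (iii). For (i), I would take the $L_2(\Omega)$-inner product of $\kappa_*\lambda\vartheta-d_*\Delta\vartheta=0$ with $\bar\vartheta$ and integrate by parts in each phase; the Neumann condition on $\partial\Omega$ kills the outer-boundary term, while $[\![\vartheta]\!]=0$ lets the two interfacial traces combine into a jump of $d_*\partial_\nu\vartheta$, which the transmission condition turns into $(g|\vartheta)_{\Gamma_*}=(N_\lambda^Hg|g)_{\Gamma_*}$.

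For (ii), the strategy mimics the Stokes case: couple (i) with the trace--interpolation bound
\[|\vartheta|_{\Gamma_*}^2\le C\|\vartheta\|_{L_2(\Omega)}^{1-2\varepsilon}\|\vartheta\|_{H^1(\Omega\setminus\Gamma_*)}^{1+2\varepsilon},\]
which follows from continuity of the trace $H^{1/2+\varepsilon}(\Omega\setminus\Gamma_*)\to L_2(\Gamma_*)$ and the complex-interpolation identity $H^{1/2+\varepsilon}=[L_2,H^1]_{1/2+\varepsilon}$. For $\lambda\ge\lambda_0>0$, (i) yields $\|\vartheta\|_{L_2}^2\le C\lambda^{-1}(N_\lambda^Hg|g)_{\Gamma_*}$ and $\|\vartheta\|_{H^1}^2\le C_{\lambda_0}(N_\lambda^Hg|g)_{\Gamma_*}$; inserting these produces the claimed bound with $\alpha=1/2-\varepsilon$. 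Injectivity follows since $N_\lambda^Hg=0$ reduces $(N_\lambda^Hg|g)_{\Gamma_*}$ to $0$, whence (i) and $\lambda>0$ force $\vartheta\equiv 0$ and then $g=-[\![d_*\partial_\nu\vartheta]\!]=0$; the operator-norm bound is immediate from Cauchy--Schwarz.

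The main obstacle is (iii), where the gain must be upgraded to $(1+\lambda)^\alpha$ uniformly down to $\lambda=0$. The key observation is that integrating the PDE over $\Omega\setminus\Gamma_*$ and invoking the divergence theorem with the boundary and transmission conditions gives
\[\lambda(\kappa_*|\vartheta)_\Omega=\int_\Omega d_*\Delta\vartheta\,dx=-\int_{\Gamma_*}[\![d_*\partial_\nu\vartheta]\!]\,ds=(g|{\sf e})_{\Gamma_*}=0,\]
so $g\in L_{2,0}$ forces $\vartheta$ to have zero $\kappa_*$-weighted mean; since $\vartheta\in H^1(\Omega)$ (by $[\![\vartheta]\!]=0$) and $\kappa_*$ is piecewise positive and bounded away from $0$, a standard compactness argument yields a $\lambda$-independent Poincar\'e inequality $\|\vartheta\|_{L_2}^2\le K\|\nabla\vartheta\|_{L_2}^2$. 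Combined with (i) this delivers $\|\vartheta\|_{L_2}^2\le C(1+\lambda)^{-1}(N_\lambda^Hg|g)_{\Gamma_*}$ and $\|\vartheta\|_{H^1}^2\le C(N_\lambda^Hg|g)_{\Gamma_*}$, and the trace--interpolation estimate from (ii) then sharpens to $|\vartheta|_{\Gamma_*}^2\le C(1+\lambda)^{-(1/2-\varepsilon)}(N_\lambda^Hg|g)_{\Gamma_*}$, as required. Finally, necessity of $(g|{\sf e})_{\Gamma_*}=0$ at $\lambda=0$ is the same integration identity; existence reduces to Lax--Milgram on the closed subspace of $H^1(\Omega)$ of functions with zero $\kappa_*$-weighted mean; and uniqueness up to constants follows by reapplying (i) at $\lambda=0$ to the difference of two solutions.
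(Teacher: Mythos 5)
Your proposal is correct and follows essentially the same route as the paper, whose proof is only a compressed sketch invoking exactly the ingredients you supply: the divergence theorem for (i), trace and interpolation theory combined with Poincar\'e's inequality (activated by the $\kappa_*$-weighted mean-zero constraint coming from $(g|{\sf e})_{\Gamma_*}=0$) for (ii) and (iii), and standard elliptic transmission/weak-solution theory for the solvability statement at $\lambda=0$. Your writeup simply fills in the details the authors leave implicit, and the estimates you derive (with $\alpha=1/2-\varepsilon$) match the stated conclusions.
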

\begin{proof}
The first assertion follows from the divergence theorem. The second and third assertions  are consequences of trace and interpolation theory, combined with Poincar\'e's inequality. The last assertion is a standard statement in the theory of elliptic transmission problems.
\end{proof}

\medskip

\noindent
(a) \, Consider $v_\lambda:=T_\lambda {\sf e}$, or equivalently $ {\sf e}=N_\lambda^S v_\lambda +
(c_* N_\lambda^H)^{-1}v_\lambda$, where we used the abbreviation $c_*=l_*^2/\theta_*$,
and where ${\sf e}$ is the characteristic function on $\Gamma_*$.
Here $\Gamma_*$ can be either connected or disconnected.
Denoting the orthogonal projection from $L_2(\Gamma_*)$ to $L_{2,0}(\Gamma_*)$ by $Q_0$, the equation for $v_\lambda$ is equivalent to
$$ v_\lambda + c_*N_\lambda^HQ_0 N_\lambda^S v_\lambda =c_*N_\lambda^H{\sf e},$$
due to Proposition \ref{NSlambda}. Multiplying this identity in $L_2(\Gamma_*)$ by $N_\lambda^Sv_\lambda$ we obtain with Propositions \ref{NSlambda} and \ref{NHlambda}
\begin{align*}
c(\lambda) |N_\lambda^Sv_\lambda|_{\Gamma_*}^2 &\leq (v_\lambda
+ c_*N_\lambda^HN_\lambda^S v_\lambda|N_\lambda^Sv_\lambda)_{\Gamma_*}
=(c_*N^H_\lambda {\sf e}|N_\lambda^Sv_\lambda)_{\Gamma_*}\\
&= c_*({\sf e}|N_\lambda^HQ_0N_\lambda^Sv_\lambda)_{\Gamma_*}
\leq C(\lambda)|N_\lambda^Sv_\lambda|_{\Gamma_*},
\end{align*}
where $c(\lambda)$ and $C(\lambda)$ are bounded near $\lambda=0$,
showing that $N_\lambda^Sv_\lambda$ is bounded near $\lambda=0$. This implies
$$\lim_{\lambda\to0} \lambda T_\lambda{\sf e}=\lim_{\lambda\to0} \lambda v_\lambda = c_*\lim_{\lambda\to0} \lambda N^H_\lambda{\sf e},$$
provided the latter limit exists.

To compute this limit, we proceed as follows. First we solve the problem
\begin{equation}
\label{NDdiffusion0}
\left\{
\begin{aligned}
 -d_*\Delta \vartheta &= -\kappa_* a_0 &&\text{in} && \Omega\setminus{\Gamma_*}\\
\partial_\nu\vartheta & =0 &&\text{on} && \partial\Omega \\
[\![\vartheta]\!] &=0 &&\text{on} && {\Gamma_*} \\
 -[\![d_*\partial_\nu\vartheta]\!] &= {\sf e} &&\text{on} && {\Gamma_*},\\
\end{aligned}
\right.
\end{equation}
where $a_0=|\Gamma_*|/(\kappa_*|1)_\Omega$,
which is solvable since the necessary compatibility condition holds. We denote the solution by $\vartheta_0$ and normalize it by $(\kappa_*|\vartheta_0)_\Omega=0$.
Then $\vartheta_\lambda=N_\lambda^H{\sf e}-\vartheta_0- a_0/\lambda$ solves the problem
\begin{equation}
\label{NDdiffusion1}
\left\{
\begin{aligned}
\kappa_* \lambda\vartheta -d_*\Delta \vartheta &=-\kappa_*\lambda \vartheta_0
&&\text{in} && \Omega\setminus{\Gamma_*} \\
\partial_\nu\vartheta &=0 &&\text{on} && \partial\Omega\\
[\![\vartheta]\!] &=0, &&\text{on} && {\Gamma_*} \\
 -[\![d_*\partial_\nu\vartheta]\!] &= 0 &&\text{on} &&{\Gamma_*}.\\
\end{aligned}
\right.
\end{equation}
By the normalization $(\kappa_*|\vartheta_0)_\Omega=0$  we see that $\vartheta_\lambda$ is bounded in $H^2_2(\Omega\setminus\Gamma_*)$ as $\lambda\to0$. Hence we have
$$\lim_{\lambda\to0}\lambda N_\lambda^H{\sf e} = \lim_{\lambda\to0}[\lambda \vartheta_\lambda+ \lambda \vartheta_0+ a_0]= a_0 =|\Gamma_*|/(\kappa_*|1)_\Omega.$$
This then implies
$$\lim_{\lambda\to 0} (B_\lambda {\sf e}|{\sf e})_{\Gamma_*}
= c_* \frac{|\Gamma_*|^2}{(\kappa_*|1)_\Omega}- \sigma|\Gamma_*|\frac{(n-1)}{R^2_*} <0,$$
if the stability condition does not hold.

\medskip

\noindent
(b) \, Next suppose that $\Gamma_*$ is disconnected,
i.e. $\Gamma_*=\cup_{k=1}^m\Gamma^k_*$,
and set $g=\sum_k a_k {\sf e}_k\neq0$ with $\sum_k a_k=0$. Hence $Q_0g=g$. Then for $v_\lambda:=T_\lambda {\sf e}$ we have as in (a) boundedness of $N_\lambda^Sv_\lambda$ and then
$$\lim_{\lambda\to0} \lambda T_\lambda g=\lim_{\lambda\to0} \lambda v_\lambda = c_*\lim_{\lambda\to0} \lambda N^H_\lambda Q_0g=0,$$
since $N_\lambda^HQ_0$ is bounded as $\lambda\to0$. This implies
$$\lim_{\lambda\to0} (B_\lambda g|g)_{L_2(\Gamma_*)} = - \frac{\sigma(n-1)}{R_*^2} \sum_k |\Gamma_*^k|a_k^2<0.$$

\medskip

\noindent
(c) \, Next we consider the behavior of $(B_\lambda g|g)_{L_2(\Gamma_*)}$ as $\lambda\to\infty$. With $c_*=l_*^2/\theta_*$ as above we first have
$$ T_\lambda = (I+c_*N_\lambda^HN_\lambda^S)^{-1}c_*N_\lambda^H = c_*N_\lambda^H - c_*N_\lambda^H N_\lambda^S(I+c_*N_\lambda^HN_\lambda^S)^{-1}c_*N_\lambda^H,$$
hence by Propositions \ref{NHlambda}, \ref{NSlambda} for $\lambda\geq\lambda_0$,
with $\lambda_0$ sufficiently large,
\begin{align*}
(T_\lambda g|g)_{\Gamma_*} &= c_*(N^H_\lambda g|g)_{\Gamma_*}-c_*^2( N_\lambda^S(I+c_*N_\lambda^HN_\lambda^S)^{-1}N_\lambda^Hg |N_\lambda^Hg)_{\Gamma_*}\\
&\geq c_*\Big[(N^H_\lambda g|g)_{\Gamma_*}- c_*^2\frac{|N_\lambda^S|_{L_2(\Gamma_*)}}{1-c_*|N_\lambda^H|_{L_2(\Gamma_*)}|N_\lambda^S|_{L_2(\Gamma_*)}}|N_\lambda^Hg|_{\Gamma_*}^2\Big]\\
&\geq c_*\Big[(N^H_\lambda g|g)_{\Gamma_*}-\frac{C\lambda_0^{-\alpha}|N_\lambda^S|_{L_2(\Gamma_*)}}{1-c_*|N_\lambda^H|_{L_2(\Gamma_*)}|N_\lambda^S|_{L_2(\Gamma_*)}}(N_\lambda^Hg|g)_{\Gamma_*}\Big]\\
&\geq c_*\Big[(N^H_\lambda g|g)_{\Gamma_*}- \frac{1}{2}(N^H_\lambda g|g)_{\Gamma_*}\Big]
=c_0(N^H_\lambda g|g)_{\Gamma_*}.
\end{align*}
Therefore, it is sufficient to bound $(N^H_\lambda g|g)_{\Gamma_*}$ from below as $\lambda\to \infty$.

For this purpose we introduce the projections $P$ and $Q$ by
$$Pg = c_m\sum_{k=1}^m (g|{\sf e}_k)_{\Gamma_*}{\sf e}_k,\quad Q=I-P,$$
where $ c_m=m/|\Gamma_*|$ in case $\Gamma_*$ has $m$ components.
Then with $g_k =(g|{\sf e}_k)_{\Gamma_*}$
\begin{align*}
|(N^H_\lambda Pg|Qg)_{\Gamma_*}|&\leq c_m \sum_k |g_k|\,|(N_\lambda^HQg|{\sf e}_k))_{\Gamma_*}|\\
&\leq C\sum_k |g_k|\,|N_\lambda^HQg|_{\Gamma_*}
\leq C\lambda^{-\alpha/2}\sum_k|g_k|(N_\lambda^HQg|Qg)^{1/2}_{\Gamma_*}\\
&\leq C\lambda^{-\alpha/2}\Big[\sum_k|g_k|^2 +m(N_\lambda^HQg|Qg)_{\Gamma_*}\Big]\\
&\leq C\lambda^{-\alpha/2}\Big[|Pg|_{\Gamma_*}^2 +(N_\lambda^HQg|Qg)_{\Gamma_*}\Big],
\end{align*}
where $C>0$ is a generic constant, which may differ from line to line.
Hence for $\lambda\geq\lambda_0$, with $\lambda_0$ sufficiently large, we have
\begin{align*}
(N_\lambda^Hg|g)_{\Gamma_*}&= (N_\lambda^HQg|Qg)_{\Gamma_*}+2(N_\lambda^HQg|Pg)_{\Gamma_*}+ (N_\lambda^HPg|Pg)_{\Gamma_*}\\
&\geq \frac{1}{2}(N_\lambda^HQg|Qg)_{\Gamma_*}+ (N_\lambda^HPg|Pg)_{\Gamma_*}- \frac{C}{\lambda_0^{\alpha/2}}|Pg|_{\Gamma_*}^2.
\end{align*}
This implies
\begin{align*}
(B_\lambda g|g)_{\Gamma_*}&=\lambda(T_\lambda g|g)_{\Gamma_*} +\sigma(\cA_*g|g)_{\Gamma_*}\\
&\geq c_0\big[\frac{\lambda}{2}(N_\lambda^HQg|Qg)_{\Gamma_*}+ \lambda(N_\lambda^HPg|Pg)_{\Gamma_*}\big]\\
&\quad +c_0\sigma (\cA_*Qg|Qg)_{\Gamma_*}- c|Pg|_{\Gamma_*}^2.
\end{align*}
Since $N_\lambda^H$ is positive semidefinite
and also $\cA_*Q$ has this property
as ${\rm im}\,(Q)\subset L_{2,0}(\Gamma_*)$,
we only need to prove that $\lambda(N_\lambda^HPg|Pg)_{\Gamma_*}$ tends to infinity as $\lambda\to\infty$.

To prove this, similarly as before we assume $\lambda\ge \lambda_0$ and estimate
\begin{equation*}
 |(N_\lambda^H{\sf e}_i|{\sf e}_j)_{L_2(\Gamma_*)}|\leq C|N_\lambda^H{\sf e}_i|_{L_2(\Gamma_*)}
\leq C\lambda_0^{-\alpha/2}(N_\lambda^H{\sf e}_i|{\sf e}_i)^{1/2}_{L_2(\Gamma_*)}.
\end{equation*}
Choosing $\lambda_0$ sufficiently large this yields
\begin{equation*}
(N_\lambda^HPg|Pg)_{L_2(\Gamma_*)}
\geq c_0\Big[ \min_i (N_\lambda^H{\sf e}_i|{\sf e}_i)_{L_2(\Gamma_*)}
- \frac{C}{\lambda_0^{\alpha}}\Big]
|Pg|_{L_2(\Gamma_*)}^2.
\end{equation*}
Therefore it is sufficient to show
\begin{equation}
\label{asymptNH}
\lim_{\lambda\to\infty} \lambda(N_\lambda^H{\sf e}_k|{\sf e}_k)_{L_2(\Gamma_*)}=\infty,
\quad k=1,\ldots,m.
\end{equation}
So suppose, on the contrary, that $\lambda_j(N_{\lambda_j}^Hg|g)_{L_2(\Gamma_*)}$ is bounded, for some $g={\sf e}_k$ and some sequence $\lambda_j\to\infty$. Then the corresponding solution $\vartheta_j$ of \eqref{NDdiffusion} is such that $v_j:=\lambda_j\vartheta_j$ is bounded in $L_2(\Omega)$ as
\begin{equation*}
\lambda_j^2 |\sqrt{\kappa_*}\vartheta_j|^2_{\Omega}
\le \lambda_j\big(\lambda_j |\sqrt{\kappa_*}\vartheta_j|^2_{\Omega}
+ |\sqrt{d_*}\nabla\vartheta_j|_{\Gamma_*}^2 \big)
=\lambda_j(N_{\lambda_j}^Hg|g)_{\Gamma_*}.
\end{equation*}
Hence $v_j$ has a weakly convergent subsequence, and we can assume without loss of generality
that  $v_j\to v_\infty$ weakly in $L_2(\Omega)$. Fix a test function $\psi\in \cD(\Omega\setminus\Gamma_*)$. Then
$$(\kappa_* v_j|\psi)_\Omega = (d_*\Delta \vartheta_j|\psi)_\Omega=
(\vartheta_j|d_*\Delta\psi)_\Omega=(v_j|d_*\Delta\psi)_\Omega/\lambda_j\to 0$$
as $j\to\infty$, hence $v_\infty=0$ in $L_2(\Omega)$. On the other hand we have
\begin{align*}
0<\frac{|\Gamma_*|}{m} &= \int_{\Gamma_*} g\,ds =
\int_{\Gamma_*} -[\![d_*\partial_\nu \vartheta_j]\!]\,ds\\
&=\int_\Omega d_*\Delta\vartheta_j\, dx = \lambda_j\int_\Omega\kappa_*\vartheta_j\, dx \to\int_\Omega \kappa_* v_\infty dx,
\end{align*}
hence $v_\infty$ is nontrivial, a contradiction. This implies that \eqref{asymptNH} is valid.

\medskip

\noindent
(d) \, Summarizing, we have shown that $B_\lambda$ is not positive semidefinite for small $\lambda>0$ if either $\Gamma_*$ is not connected or the stability condition does not hold, and $B_\lambda$ is always positive semidefinite for large $\lambda$. Set
$$\lambda_0 = \sup\{\lambda>0:\, B_\mu \mbox{ is not positive semidefinite for each } \mu\in(0,\lambda]\}.$$
Since $B_\lambda$ has compact resolvent, $B_\lambda$ has a negative eigenvalue for each $\lambda<\lambda_0$. This implies that $0$ is an eigenvalue of
$B_{\lambda_0}$, thereby proving that $-L$ admits the positive eigenvalue $\lambda_0$.

Moreover, we have also shown that
$$ B_0h:= \lim_{\lambda\to0} \lambda T_\lambda h + \sigma \cA_* h
= c_*\frac{|\Gamma_*|}{(\kappa_*|1)_\Omega}(I-Q_0)h + \sigma \cA_* h .$$
Therefore, $B_0$ has the eigenvalue
$c_*|\Gamma_*|/(\kappa_*|1)_{L_2(\Omega)}-\sigma(n-1)/R_*^2$ with
eigenfunction ${\sf e}$, and in case $m>1$ it also possesses the
eigenvalue $-\sigma(n-1)/R_*^2$ with precisely $(m-1)$ linearly
independent eigenfunctions of the form $\sum_k a_k {\sf e}_k$ with
$\sum_k a_k=0$. This implies that $-L$ has exactly $m$
positive eigenvalues if the stability condition does not hold, and
$m-1$ otherwise.

\medskip

\noindent
{\bf (iv)} (a) Suppose that $(u,\vartheta,h)$ is an eigenfunction of
$L$ for the eigenvalue $\lambda=0$.
Then \eqref{evid} yields
\begin{equation}
\label{lambda=0}
2|\mu_*^{1/2}D|_\Omega^2+|d_*^{1/2}\nabla\vartheta|_\Omega^2=0.
\end{equation}
It follows from \eqref{lambda=0} and
\eqref{ev-u}-\eqref{ev-theta} that
$\vartheta$ is constant and $D=0$ on $\Omega$.
Korn's inequality, in turn, implies $\nabla u=0$ on $\Omega$, and we then have $u=0$
by the no-slip condition on $\partial \Omega$.
Moreover, the pressures are constant in the phases and we have
\begin{equation*}
[\![\pi]\!]+\sigma \cA_* h=0, \quad l_*\vartheta -\sigma \cA_*h=0\quad\text{on}
\quad \Gamma_\ast.
\end{equation*}
We can now conclude from
the relation $l_*\vartheta -\sigma \cA_*h=0$ that
the kernel of $L$ is given by
\begin{equation}
\label{N-L-m}
N(L) = {\rm span}\big\{(0,\frac{-\sigma(n-1)}{l_*R^2_*},{\sf e}),
(0,0,Y^k_1),\ldots,(0,0,Y^k_n) : \, 1\le k\le m\big\},
\end{equation}
where the functions $Y^k_j=Y^k_j{\sf e}_k$
denote the {\em spherical harmonics of degree one} on $\Gamma_\ast^k$,
normalized by $(Y^k_i|Y^k_j)_{\Gamma^k_*}=\delta_{ij}$.
This shows that $N(L)$ has dimension $(mn+1)$,
in accordance with the situation for the Stefan problem with surface tension \cite{PSZ10}.
\medskip

\noindent
(b) It remains to show that $\lambda=0$ is semi-simple if $s\neq 0 $.
We concentrate on the case where $\Gamma_*$ is connected,
for simplicity. The disconnected case is treated in complete analogy.
So suppose $(u,\vartheta,h)\in N(L^2)$. Hence $L(u,\vartheta,h)\in N(L)$, i.e.
\begin{equation*}
L(u,\vartheta,h)=\alpha_0(0,-{\sigma (n-1)}/(l_*R^2_*),Y_0)+\sum_{l=1}^n \alpha_l (0,0, Y_l),
\end{equation*}
where  $\alpha_0,\alpha_l$ are appropriate coefficients and $Y_0=1$.
Thus $(u,\vartheta,h)$ solves the equations
\begin{equation}
\label{semi-simple-u}
\left\{
\begin{aligned}
-\mu_* \Delta u +\nabla\pi & =0 &&\text{in} && \Omega\setminus{\Gamma_*}\\
{\rm div}\, u &=0 &&\text{in} && \Omega\setminus{\Gamma_*}\\
u &= 0 &&\text{on} && \partial\Omega \\
[\![u]\!] &= 0  &&\text{on} &&{\Gamma_*}\\
 -[\![T\nu]\!] &= -\sigma (\cA_* h)\nu &&\text{on} && {\Gamma_*},\\
\end{aligned}
\right.
\end{equation}
and
\begin{equation}
\label{semi-simple-theta}
\left\{
\begin{aligned}
 -d_*\Delta \vartheta &=-\alpha_0\kappa_*\sigma(n-1)/l_*R^2_*
 &&\text{in} && \Omega\setminus{\Gamma_*}\\
 \partial_\nu\vartheta &=0 &&\text{on} && \partial\Omega \\
[\![\vartheta]\!] &=0 &&\text{on} &&{\Gamma_*}\\
l_*\vartheta -\sigma \cA_*h &= 0  &&\text{on} && {\Gamma_*}\\
-(l_*/\theta_*)u\cdot\nu -[\![d_*\partial_\nu\vartheta]\!] &=
 (l_*/\theta_*) \Sigma_{l=0}^n{\alpha_l}Y_l
&&\text{on} &&{\Gamma_*},\\
\end{aligned}
\right.
\end{equation}
We have to show $\alpha_l=0$ for all $l$.
Integrating the equation for the temperature over $\Omega$ we find
\begin{equation}
\label{alpha-0}
\alpha_0 \frac{\sigma(n-1)(\kappa_*|1)_\Omega} {l_*R^2_*}
=\alpha_0 \frac{ l_*|\Gamma_*|}{\theta_*},
\end{equation}
as $u\cdot \nu$ and the spherical harmonics $Y_l,\; 1\le l\le n$, all have mean zero on $\Gamma_*$.
Therefore, $\alpha_0=0$, unless there is equality in the stability condition.
If $s\neq 0$, and hence  $\alpha_0=0,$ it follows from \eqref{semi-simple-u} and \eqref{semi-simple-theta}
\begin{equation*}
\begin{split}
0&=2|\mu_*^{1/2}D|_\Omega^2 +\sigma(\cA_*h|u\cdot \nu)_{\Gamma_*},\\
0&= \theta_*|d_*^{1/2}\nabla\vartheta|_\Omega^2-
\sigma(\cA_*h|u\cdot\nu +\sum_{l=1}^n\alpha_l Y_l)_{\Gamma_*}
=\theta_*|d_*^{1/2}\nabla\vartheta|_\Omega^2
- \sigma(\cA_*h|u\cdot\nu )_{\Gamma_*},
\end{split}
\end{equation*}
as $\cA_*$ is self-adjoint and $\cA Y_l=0$ for the spherical harmonics.
Adding these equations gives
\begin{equation*}
2|\mu_*^{1/2}D|_\Omega^2 + \theta_*|d_*^{1/2}\nabla\vartheta|_\Omega^2=0.
\end{equation*}
This implies $D=0$, $\vartheta$ constant, $u\cdot \nu=0$  and $u=0$, which in turn yields
$ 0=\sum_{l=1}^n{\alpha_l}Y_l.$
Thus $\alpha_l=0$ for all $l$ since the spherical harmonics $Y_l$ are linearly independent.
Therefore, the eigenvalue $\lambda=0$ is semi-simple.

\medskip
\noindent
{\bf (v)}
Suppose for the moment that $\Gamma_*$ consists of a single sphere of radius
$R_*=\sigma(n-1)/[\![\psi(\theta_*)]\!]$, centered at the origin of $\R^n$.
Suppose ${\mathcal S}$ is a sphere that is
sufficiently close to $\Gamma_*$. Denote by $(\zeta_1,\ldots,\zeta_n)$ the
coordinates of its center and let $\zeta_0$ be such that
$\sigma(n-1)/[\![\psi(\theta_*+\theta_*\zeta_0)]\!]$
corresponds to its radius.
We observe that the equation $\sigma(n-1)/[\![\psi(\theta_*+\theta_*\zeta_0)]\!]=R$ has a unique
solution $\zeta_0$ for $R$ close to $R_*$, as
$[\![\psi^\prime( \theta_*)]\!]\neq 0$ by assumption.
Then, by \cite[Section 6]{ES98}, the
sphere ${\mathcal S}$ can be parameterized over $\Gamma_*$ by the
distance function
\[
\rho(\zeta)=\sum_{j=1}^n \zeta_j Y_j-R_*+\sqrt{(\sum_{j=1}^n \zeta_j
Y_j)^2+(\sigma(n-1)/[\![\psi(\theta_*+\theta_*\zeta_0)]\!])^2-\sum_{j=1}^n \zeta_j^2}.
\]
Denoting by $O$ a sufficiently small neighborhood of $0$ in
$\R^{n+1}$, the mapping
\begin{equation*}
[\zeta\mapsto \Psi(\zeta):=(0,\theta_*(1+\zeta_0),\rho(\zeta))]:O\to
W^2_p(\Omega)^{n+1}\times W^{4-1/p}_p(\Gamma_*)
\end{equation*}
is $C^1$ (in fact $C^k$ if $\psi$ is $C^k$), and the derivative at $0$ is given by
\begin{equation*}
\Psi'(0)z=\big(0,\theta_*,-\sigma(n-1)\theta_* [\![\psi^\prime(\theta_*)]\!]/[\![\psi(\theta_*)]\!]^2\big)z_0
+\big(0,0,\sum_{j=1}^n z_j Y_j\big),
\quad z\in \R^{n+1}.
\end{equation*}
Noting that
$$
\frac{\sigma(n-1)\theta_* [\![\psi^\prime(\theta_*)]\!]}{[\![\psi(\theta_*)]\!]^2}
=\frac{l_*R^2_*}{\sigma(n-1)}
$$
we can conclude that
near $e_*=(0,\theta_*,\Gamma_*)$ the set $\cE$ of equilibria is a
$C^1$-manifold in $W^2_p(\Omega)^n\times W^2_p(\Omega)\times W^{4-1/p}_p(\Gamma_*)$ of
dimension $(n+1)$, and that $T_{e_*}\cE$ 
is isomorphic to the eigenspace $N(L)$.

It is now easy to see that this result remains valid
for the case of $m$ spheres of the same radius $R_\ast$. The
dimension of $\cE$ is then given by $(mn+1)$, as $mn$ parameters are
needed to locate the respective centers, and one additional
parameter is needed to track the common radius.
\end{proof}
\begin{remarks}
(a) One should observe that for the case $s=0$, the eigenvalue $\lambda=0$
ceases to be semi-simple and the dimension of the generalized eigenspace raises by one.
This can be shown by similar arguments
as in the proof of Theorem~2.1.(c) in \cite{PrSi08}.
\medskip\\
(b)
For the Fr\'echet derivative of the energy functional ${\sf E}(u,\theta,\Gamma)$,
see \eqref{energy} for the definition,
we obtain
\begin{equation*}
\langle {\sf E}^\prime(u,\theta,\Gamma)|(v,\vartheta,h)\rangle
= \int_\Omega (u\cdot v +\epsilon^\prime(\theta)\vartheta)\,dx
-\int_\Gamma (\sigma H_\Gamma +[\![\frac{1}{2}|u|_2^2 +\epsilon(\theta)]\!])h\,ds.
\end{equation*}
At equilibrium $(u,\theta,\Gamma)=(0,\theta_*,\Gamma_*)$
this yields
\begin{equation*}
\langle {\sf E}^\prime(0,\theta_*,{\Gamma_*})|(v,\vartheta,h)\rangle
= \int_\Omega\kappa_*\vartheta\, dx -\int_{\Gamma_*} (\sigma H_{\Gamma_*} +[\![\epsilon_*]\!])h\,ds.
\end{equation*}
Here
$[\![\epsilon_*]\!]:=[\![\epsilon(\theta_*)]\!]
=[\![\psi(\theta_*)]\!]-\theta_*[\![\psi^\prime(\theta_*)]\!]=-(\sigma H_{\Gamma_*}+l_\ast) $
where we used the equilibrium relation $[\![\psi(\theta_*)]\!]+\sigma H_{\Gamma_*}=0$
and the definition of $l_\ast$ in the last step.
Preservation of energy then requires
$\langle {\sf E}^\prime(0,\theta_*,{\Gamma_*})|(v,\vartheta,h)\rangle = 0$, hence
\begin{equation*}
\int_\Omega\kappa_*\vartheta\,dx + l_*\int_{\Gamma_*} h\,ds=0.
\end{equation*}
In this case  $h=\sum_{l=0}^n \alpha_l Y_l$, where $Y_0=1$
and $Y_l$ denote the orthonormalized spherical harmonics of degree one.
Hence  $\bar{h}=\alpha_0$, and
$\vartheta=-\alpha_0\sigma (n-1)\theta_*/ l_*R_*^2$, which implies
$$\alpha_0\Big[\frac{\sigma(n-1)\theta_*(\kappa_*|1)_\Omega}{l_*R^2_*} - l_*|{\Gamma_*}|\Big]=0.$$
Thus $\bar{h}=\alpha_0=0$ unless we have equality in the stability condition \eqref{sc}.
Conservation of energy kicks out one dimension of the eigenspace.
\end{remarks}

\section{Nonlinear Stability of Equilibria}
\noindent
{\bf 1.}\, We now consider problem \eqref{i2pp} in a neighborhood of a non-degenerate equilibrium $e_*=(0,\theta_*,\Gamma_*)\in\cE$, with  $l_*=l(\theta_*)\neq0$.
Setting $\Sigma=\Gamma_*$
the transformed problem becomes
\begin{equation}
\label{enonlin-u-theta}
\left\{
\begin{aligned}
\partial_t u-\mu_* \Delta u +\nabla\pi &=F_u(u,\pi,\vartheta,h) &&\text{in} &&\Omega\setminus{\Sigma}\\
{\rm div}\, u &=F_d(u,h) &&\text{in} && \Omega\setminus{\Sigma}\\
u=0,\;\;\partial_\nu\vartheta &=0 &&\text{on} &&\partial\Omega\\
[\![u]\!]=0,\;\; [\![\vartheta]\!]&= 0 &&\mbox{on} &&\Sigma\\
-P_\Sigma[\![\mu_* (\nabla u +[\nabla u]^{\sf T})]\!]\nu_\Sigma &= G_\tau(u,\vartheta,h)
 &&\mbox{on} && {\Sigma}\\
 -[\![T\nu_\Sigma\cdot\nu_\Sigma]\!] +\sigma \cA_\Sigma h &=G_\nu(u,\vartheta,h)
&&\mbox{on} && {\Sigma}\\
\kappa_*\partial_t\vartheta -d_*\Delta \vartheta &=F_\theta(u,\vartheta,h)
&&\text{in} &&\Omega\setminus{\Sigma} \\
l_*\vartheta-\sigma \cA_\Sigma h  &= G_\theta(\vartheta,h) &&\text{on} &&{\Sigma}\\
(l_*/\theta_*)(\partial_th-u\cdot\nu_\Sigma) -[\![d_*\partial_\nu\vartheta]\!]&= G_h(u,\vartheta,h)
&& \text{on} &&{\Sigma}\\
u(0)=u_0,\; \vartheta(0)=\vartheta_0,\; h(0)&=h_0. && && \\
\end{aligned}
\right.
\end{equation}
The nonlinearities on the right hand side of \eqref{enonlin-u-theta} are, up to some straightforward modifications, defined in \cite[Section 7]{PSSS11}.
It follows that the nonlinearities
are of class $C^1$ from $\EE(J)$ to $\FF(J)$, and they satisfy
\begin{equation*}
F_j(0)=G_k(0)=F_j^\prime(0)=G_k^\prime(0)=0,
\quad j\in\{u,d,\theta\},\quad  k\in\{\tau,\nu,\theta,h\}.
\end{equation*}
In order to shorten notation, we will occasionally write \eqref{enonlin-u-theta} in short form
\begin{equation*}
\LL z=N(z),\quad z(0)=z_0.
\end{equation*}
The state manifold locally near the equilibrium $e_*=(0,\theta_*,\Gamma_*)$  reads as
\begin{eqnarray}
\label{phasemanifeq}
\cSM:=&&\hspace{-0.5cm}\Big\{(u,\vartheta,h)
\in \big( W^{2-2/p}_p(\Omega\setminus\Sigma)\cap C(\bar{\Omega})\big)^{n+1}
\times W^{4-3/p}_p(\Sigma),\nonumber\\
 &&\;\;{\rm div}\, u=F_d(u,h)\; \mbox{ in }\; \Omega\setminus\Sigma,\quad u=\partial_\nu\vartheta =0 \mbox{ on } \partial\Omega,\\
 &&  -P_\Sigma[\![\mu_* (\nabla u+[\nabla u]^{\sf T}]\!]\nu_\Sigma =G_\tau(u,\vartheta,h) \mbox{ on }\; \Sigma,\nonumber\\
&& l_*\vartheta-\sigma \cA_\Sigma h=G_\theta(\vartheta,h),\quad
[\![d_*\partial_\nu \vartheta]\!] + G_h(u,\vartheta,h)\in W^{2-6/p}_p(\Sigma)\Big\}.\nonumber
\end{eqnarray}
Note that due to the compatibility conditions this is a nonlinear manifold.
We shall parameterize this manifold over its tangent space
\begin{equation*}
\label{tangphasemanifeq}
\begin{aligned}
\tilde{Z}:=&
\Big\{(\tilde{u},\tilde{\vartheta},\tilde{h})\in
 \big(W^{2-2/p}_p(\Omega\setminus\Sigma)\cap C(\bar{\Omega})\big)^{n+1}\times W^{4-3/p}_p(\Sigma),
 \\
 &{\rm div}\,\tilde{u}=0\; \mbox{ in }\; \Omega\setminus\Sigma,
 \quad \tilde{u}=\partial_\nu\tilde{\vartheta} =0 \mbox{ on } \partial\Omega,
 \\
 &-P_\Sigma[\![\mu_* (\nabla\tilde{u}+[\nabla \tilde{u}]^{\sf T}]\!]\nu_\Sigma =0,\;\;
 l_*\tilde{\vartheta}-\sigma \cA_\Sigma \tilde{h}=0,
 \;\; [\![d_*\partial_\nu \tilde{\vartheta}]\!] \in W^{2-6/p}_p(\Sigma)
\Big\}.
\end{aligned}
\end{equation*}
We mention that the norm in $\tilde{Z}$ is given by
\begin{equation*}
|(\tilde{u},\tilde{\vartheta},\tilde{h})|_{\tilde{Z}}=
|\tilde{u}|_{W^{2-2/p}_p(\Omega\setminus\Sigma)}
+ |\tilde{\vartheta}|_{W^{2-2/p}_p(\Omega\setminus\Sigma)}
+ |\tilde{h}|_{W^{4-3/p}_p(\Sigma)}
+ |[\![d_*\partial_\nu\tilde{\vartheta}]\!]|_{W^{2-6/p}_p(\Sigma)}.
\end{equation*}
\medskip
\noindent
{\bf 2.}\, In order to parameterize the state manifold $\cSM$ over $\tilde{Z}$ near
the given equilibrium $(0,\theta_*,\Sigma)$ we consider the linear elliptic problem
\begin{equation}
\label{lin-param}
\left\{
\begin{aligned}
\omega u -\mu_*\Delta u +\nabla\pi&=0 &&\text{in} && \Omega\setminus\Sigma \\
{\rm div}\, u&= f_d &&\text{in} &&\Omega\setminus\Sigma \\
u=\partial_{\nu} \vartheta &=0 &&\text{on} &&\partial \Omega\\
[\![u]\!]=[\![\vartheta]\!]&=0 &&\text{on} &&\Sigma\\
-[\![\mu_*(\nabla u +[\nabla u]^{\sf T}]\!]\nu_\Sigma +[\![\pi]\!]\nu_\Sigma
+\sigma (\cA_\Sigma h)\nu_\Sigma&=g_u &&\text{on} &&\Sigma \\
\kappa_*\omega \vartheta-d_*\Delta \vartheta&=0 &&\text{in} &&\Omega\setminus\Sigma\\
l_* \vartheta - \sigma \cA_\Sigma h &=g_\theta && \text{on} &&\Sigma\\
(l_*/\theta_*)(\omega h-u\cdot\nu_\Sigma) -[\![d_*\partial_\nu \vartheta]\!]&=g_h
&&\text{on} &&\Sigma
\end{aligned}
\right.
\end{equation}
for given data $(f_d,g_u,g_h,g_\theta)$. For this problem we have the following result.
\begin{proposition}
\label{param} Suppose $p>3$, $l_*\neq0$ and $\omega>0$ is sufficiently large. Then problem \eqref{lin-param}
admits a unique solution $(u,\pi,\vartheta,h)$ with regularity
$$ (u,\vartheta,h)\in \big( W^{2-2/p}_p(\Omega\setminus\Sigma)\cap C(\bar\Omega)\big)^{n+1}
\times W^{4-3/p}_p(\Sigma),\quad \pi\in \dot{W}^{1-2/p}_p(\Omega\setminus\Sigma),$$
if and only if the data $(f_d,g_u,g_h,g_\theta)$ satisfy
$$f_d\in W^{1-2/p}_p(\Omega\setminus\Sigma)\cap \dot{H}^{-1}_p(\Omega),  \quad
(g_u,g_h)\in W^{1-3/p}_p(\Sigma)^{n+1},\quad g_\theta\in W^{2-3/p}_p(\Sigma).$$
The solution map $[(f_d,g_u,g_h,g_\theta)\mapsto(u,\pi,\vartheta,h)]$ is continuous in the corresponding spaces.
\end{proposition}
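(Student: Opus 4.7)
My plan is to view \eqref{lin-param} as the resolvent equation of the linear evolution problem \eqref{elin-u-theta} with $\partial_t$ replaced by $\omega$, and to deduce the isomorphism property from the maximal $L_p$-regularity result quoted in Section~3.

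\medskip\noindent
\textbf{Necessity of the regularity conditions.}
First I verify the ``only if'' direction by trace theory. If $(u,\pi,\vartheta,h)$ has the claimed regularity, then $f_d=\mathrm{div}\,u$ automatically lies in $W^{1-2/p}_p(\Omega\setminus\Sigma)$; testing it against $\phi\in H^1_{p^\prime}(\Omega)$ and integrating by parts while using $u|_{\partial\Omega}=0$ and $[\![u]\!]=0$ places $f_d$ additionally in $\dot H^{-1}_p(\Omega)$. The data $g_u$ and $g_h$ are built from traces of $\nabla u$, $\pi$, $\cA_\Sigma h$ and $[\![d_*\partial_\nu\vartheta]\!]$, all of which belong to $W^{1-3/p}_p(\Sigma)$ by standard trace theorems. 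Finally, $g_\theta=l_*\vartheta-\sigma\cA_\Sigma h$ inherits regularity $W^{2-3/p}_p(\Sigma)$ from $\vartheta|_\Sigma\in W^{2-3/p}_p(\Sigma)$ and $h\in W^{4-3/p}_p(\Sigma)$.

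\medskip\noindent
\textbf{Existence via the time-dependent theory.}
For the ``if'' direction I would extend the data as time-independent functions on $J=\RR_+$ and invoke the isomorphism statement recorded at the end of part~\textbf{1} of Section~3, with $\partial_t$ replaced by $\partial_t+\omega$ and $\omega$ sufficiently large: this operator maps $\EE(\RR_+)$ onto $\R(\RR_+)\subset\FF(\RR_+)\times\gamma\EE$. Stationary data clearly belong to the relevant weighted maximal regularity classes, and the resulting solution, being unique and invariant under time-translation, is time-independent. Its value at $t=0$ lies in $\gamma\EE$, which is precisely the stationary regularity class claimed in the proposition; the pressure $\pi$ is then recovered as in the weak transmission problem of Section~2.

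\medskip\noindent
\textbf{Uniqueness and continuity.}
Uniqueness I obtain from the energy identity. Taking $L_2$ inner products of the homogeneous version of \eqref{lin-param} with $\bar u$ and with $\bar\vartheta$ and proceeding exactly as in the derivation of \eqref{evid} (with $\lambda$ replaced by $\omega>0$), one gets
\[
0=\omega|u|_\Omega^2+2|\mu_*^{1/2}D|_\Omega^2+\sigma\omega(\cA_*h|h)_{\Gamma_*}+\theta_*\bigl(\omega|\kappa_*^{1/2}\vartheta|_\Omega^2+|d_*^{1/2}\nabla\vartheta|_\Omega^2\bigr).
\]
The only potentially negative term is the contribution of $-(n-1)/R_*^2$ hidden inside $\cA_*$ on the mean value $\bar h$, which is controlled by $\omega\theta_*|\kappa_*^{1/2}\vartheta|_\Omega^2$ via the Gibbs--Thomson relation $l_*\vartheta=\sigma\cA_\Sigma h$ once $\omega$ is sufficiently large. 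Hence $u=\vartheta=0$, and then $h=0$ from the same relation. Continuity of the solution map follows from the open mapping theorem applied to the bounded bijection just established.

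\medskip\noindent
\textbf{Expected main obstacle.}
The delicate point is the simultaneous handling of the three coupled conditions on $\Sigma$: the tangential Stokes traction $g_u$, the Stefan-type flux condition for $h$ through $g_h$, and the Gibbs--Thomson relation $l_*\vartheta-\sigma\cA_\Sigma h=g_\theta$. Rather than re-deriving Lopatinskii--Shapiro-type estimates from scratch, the strategy is to rely on the full $L_p$-theory of \cite[Section~5]{PSSS11}; the only price is the reduction to time-constant data, which is exactly where the smallness of the data space in $\omega$ (and therefore the hypothesis ``$\omega$ sufficiently large'') is essential, both to guarantee invertibility of $\omega+L$ on the coupled system and to extract the stationary solution from the time-dependent one by uniqueness.
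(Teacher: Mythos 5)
There is a genuine gap in your main existence argument: a regularity mismatch that prevents the reduction to the parabolic maximal regularity isomorphism. Proposition \ref{param} is posed at the \emph{time-trace} level of regularity: the data are only required to satisfy $g_u,g_h\in W^{1-3/p}_p(\Sigma)$, $g_\theta\in W^{2-3/p}_p(\Sigma)$, $f_d\in W^{1-2/p}_p(\Omega\setminus\Sigma)\cap\dot H^{-1}_p(\Omega)$, whereas the parabolic data spaces $\FF_j(J)$ require spatial regularity $W^{1-1/p}_p(\Sigma)$, $W^{2-1/p}_p(\Sigma)$, $H^1_p(\Omega\setminus\Gamma_*)$, i.e.\ $2/p$ more derivatives. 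Hence the constant-in-time extension of admissible data for \eqref{lin-param} does \emph{not} lie in $\FF(\RR_+)$, and the isomorphism $\LL_\omega:\EE(\RR_+)\to\R(\RR_+)$ cannot be invoked. Conversely, for data that do happen to lie in the stronger spaces, the time-independent solution produced by your argument belongs to $H^2_p(\Omega\setminus\Sigma)^{n+1}\times W^{4-1/p}_p(\Sigma)$, which is strictly more than the class $W^{2-2/p}_p\times W^{4-3/p}_p$ in the proposition; so your construction neither covers the full data class nor yields the exact ``if and only if'' correspondence at the $-3/p$ level that is needed later to parameterize the state manifold (Theorem \ref{param-phi}). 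This is why the paper does not reduce to the evolution problem but solves the elliptic transmission problem \emph{directly}, ``in the same way as the corresponding linear parabolic problem,'' i.e.\ by the same localization and half-space model-problem analysis, carried out at the elliptic (trace-space) regularity level.

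Your uniqueness argument is also not sound as stated. For homogeneous data, \eqref{lin-param} with parameter $\omega$ is precisely the eigenvalue problem \eqref{ev-u}--\eqref{ev-theta} with $\lambda=\omega$, and Theorem \ref{linstabtheorem}(iii) shows that positive eigenvalues genuinely occur (when $s>0$ or $\Gamma_*$ is disconnected). Your claim that the negative contribution of $\cA_*$ on the mean $\bar h$ is absorbed by $\omega\theta_*|\kappa_*^{1/2}\vartheta|^2_\Omega$ ``once $\omega$ is large'' fails quantitatively: as in \eqref{evid2}, the relation between $\bar\vartheta$ and $\bar h$ is $\omega$-independent, so the problematic term $\omega|\Gamma_*|\{l_*^2|\Gamma_*|/(\theta_*(\kappa_*|1)_\Omega)-\sigma(n-1)/R_*^2\}\bar h^2$ scales linearly in $\omega$ exactly like the terms you want to dominate it with, and its sign does not improve as $\omega$ grows. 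Uniqueness for large $\omega$ is nevertheless true, but it must be obtained from the spectral information (e.g.\ $-L$ generates an analytic semigroup with compact resolvent, so only finitely many eigenvalues have positive real part and every sufficiently large real $\omega$ lies in the resolvent set), or from the refined large-$\lambda$ analysis of the Neumann-to-Dirichlet operators in part (iii)(c) of the proof of Theorem \ref{linstabtheorem} — not from the naive energy identity.
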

\begin{proof}
This purely elliptic problem can be solved in the same way as the corresponding linear
parabolic problem.
\end{proof}
\begin{theorem}
\label{param-phi}
There exists a neighborhood $\tilde{U}$ of $0$ in $\tilde{Z}$ and a map
\begin{equation*}
\phi\in C^1\big(\tilde{U},(W^{2-2/p}_p(\Omega\setminus\Sigma)\cap C(\bar{\Omega}))^{n+1}
\times W^{4-3/p}_p(\Sigma)\big)\quad\text{with $\ \phi(0)=\phi^\prime(0)=0$,}
\end{equation*}
such that
$
[\tilde{z}\mapsto \tilde{z}+\phi(\tilde{z})]: \tilde{U}\to \cSM
$
provides a parameterization of the state manifold $\cSM$ near the equilibrium
$(0,\theta_*,\Sigma)$.
\end{theorem}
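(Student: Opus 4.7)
The plan is to realize $\phi$ via the implicit function theorem, with Proposition~\ref{param} supplying a bounded linear right inverse to the linearized compatibility conditions. Given small $\tilde z = (\tilde u, \tilde\vartheta, \tilde h) \in \tilde Z$, the correction $(w,\tau,k) := \phi(\tilde z)$ must satisfy
\begin{align*}
{\rm div}\, w &= F_d(\tilde u + w, \tilde h + k),\\
-P_\Sigma[\![\mu_*(\nabla w + [\nabla w]^{\sf T})]\!]\nu_\Sigma &= G_\tau(\tilde z + (w,\tau,k)),\\
l_*\tau - \sigma \cA_\Sigma k &= G_\theta(\tilde\vartheta + \tau, \tilde h + k),
\end{align*}
together with the regularity condition $[\![d_*\partial_\nu(\tilde\vartheta + \tau)]\!] + G_h(\tilde z + (w,\tau,k)) \in W^{2-6/p}_p(\Sigma)$ and the homogeneous boundary and transmission conditions $w = \partial_\nu\tau = 0$ on $\partial\Omega$, $[\![w]\!] = [\![\tau]\!] = 0$ on $\Sigma$. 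These are precisely the linearized compatibility conditions defining $\tilde Z$, perturbed by the higher-order nonlinear terms, so a fixed-point construction on the correction is natural.

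Let $S$ denote the bounded solution operator for \eqref{lin-param} provided by Proposition~\ref{param}, with a fixed sufficiently large $\omega>0$. Given $\tilde z$ and a candidate correction $y = (w,\tau,k)$, I would form the data tuple $(f_d, g_u, g_\theta, g_h)$ by evaluating $F_d$, an appropriate combination of $G_\tau$ and $G_\nu$ (with the pressure reconstructed internally by $S$), $G_\theta$, and $G_h$ at $\tilde z + y$, and then define
\begin{equation*}
\mathcal{T}(\tilde z, y) := S(f_d, g_u, g_\theta, g_h).
\end{equation*}
Any fixed point $y = \mathcal{T}(\tilde z, y)$ delivers $\phi(\tilde z) := y$ such that $\tilde z + \phi(\tilde z) \in \cSM$.

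To produce such a fixed point depending $C^1$ on $\tilde z$, I would invoke the parameterized contraction mapping principle. The nonlinearities $F_j, G_k$ are of class $C^1$ between the relevant spaces and satisfy $F_j(0)=G_k(0)=0$, $F_j^\prime(0)=G_k^\prime(0)=0$, so the corresponding superposition operators are Lipschitz with arbitrarily small constant on small balls around zero. Combined with the boundedness of $S$, this makes $\mathcal{T}(\tilde z, \cdot)$ a uniform strict contraction for $\tilde z$ close to the origin. The resulting unique fixed point $\phi(\tilde z)$ depends $C^1$ on $\tilde z$, with $\phi(0)=0$ immediate and $\phi^\prime(0)=0$ a direct consequence of the second-order vanishing of the nonlinearities.

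The main technical hurdle is regularity bookkeeping. The defining condition $[\![d_*\partial_\nu\vartheta]\!] + G_h(u,\vartheta,h) \in W^{2-6/p}_p(\Sigma)$ asks for more regularity than the natural data space $W^{1-3/p}_p(\Sigma)$ available for $g_h$ in Proposition~\ref{param}; one must check that $G_h$ produces data in $W^{2-6/p}_p(\Sigma)$ when applied to states of the expected regularity, and that $S$ respects this extra smoothness. This is where the specific structure of $G_h$ from \cite[Section~7]{PSSS11} and a refined elliptic estimate for \eqref{lin-param} come into play, and it is the point that requires the most care; once this is in place, the fixed-point scheme yields $\phi$ with all properties asserted in the theorem.
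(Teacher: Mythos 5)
Your construction is essentially the paper's: there, the correction $\bar z=\phi(\tilde z)$ is obtained by solving the nonlinear elliptic system \eqref{param1} -- which is precisely \eqref{lin-param} with the data replaced by the nonlinearities evaluated at $\tilde z+\bar z$ -- via the implicit function theorem based on Proposition~\ref{param} with a fixed large $\omega$; your parameterized contraction built from the solution operator $S$ is the same device, and the smallness coming from $F_j(0)=G_k(0)=F_j^\prime(0)=G_k^\prime(0)=0$ is used identically to get $\phi\in C^1$ with $\phi(0)=\phi^\prime(0)=0$. However, the point you single out as the main technical hurdle is not actually an obstacle, and leaving it open leaves your verification that $\tilde z+\phi(\tilde z)\in\cSM$ incomplete: the membership condition in \eqref{phasemanifeq} concerns the \emph{combination} $[\![d_*\partial_\nu\vartheta]\!]+G_h(u,\vartheta,h)$, not $G_h$ alone. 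Since your fixed point $y=(w,\tau,k)$ satisfies the last equation of \eqref{lin-param} with $g_h=G_h(\tilde z+y)$, one has $[\![d_*\partial_\nu\tau]\!]+G_h(\tilde z+y)=(l_*/\theta_*)(\omega k-w\cdot\nu_\Sigma)\in W^{2-3/p}_p(\Sigma)\hookrightarrow W^{2-6/p}_p(\Sigma)$, while $[\![d_*\partial_\nu\tilde\vartheta]\!]\in W^{2-6/p}_p(\Sigma)$ holds by the definition of $\tilde Z$; adding these gives the required condition using only the regularity $g_h\in W^{1-3/p}_p(\Sigma)$ furnished by Proposition~\ref{param}, with no refined elliptic estimate and no extra smoothness of $G_h$.

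The genuine gap is that you only establish $\tilde z+\phi(\tilde z)\in\cSM$, whereas the theorem claims this map \emph{parameterizes} $\cSM$ near $(0,\theta_*,\Sigma)$, which requires that every $z=(u,\vartheta,h)\in\cSM$ close to the equilibrium be of the form $\tilde z+\phi(\tilde z)$ with $\tilde z\in\tilde U$. The paper proves this surjectivity by a short separate argument: for given $z\in\cSM$ solve the \emph{linear} problem \eqref{param1} with the now-given data $(F_d(u,h),G_\tau(u,\vartheta,h),G_\theta(\vartheta,h),G_h(u,\vartheta,h))$ by Proposition~\ref{param}, obtaining $\bar z$; then $\tilde z:=z-\bar z$ satisfies exactly the homogeneous compatibility conditions defining $\tilde Z$ (the conditions of $\cSM$ minus those imposed in \eqref{param1}), and $\bar z=\phi(\tilde z)$ by uniqueness of the small solution of the fixed-point problem. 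Without this step the claim that the map provides a parameterization of the state manifold is not proved.
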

\begin{proof}
Fix any large $\omega>0$.
Given $\tilde{z}=(\tilde{u},\tilde{\vartheta},\tilde{h})\in \tilde{Z}$ sufficiently small, and setting
 $(u,\vartheta,h)=(\tilde{u},\tilde{\vartheta},\tilde{h})+(\bar{u},\bar{\vartheta},\bar{h})$,
 we solve the nonlinear elliptic problem
\begin{equation}
\label{param1}
\left\{
\begin{aligned}
\omega \bar{u} -\mu_*\Delta\bar{u} +\nabla\bar{\pi}&=0 &&\text{in} &&\Omega\setminus\Sigma\\
{\rm div}\, \bar{u}&= F_d(u,h) &&\text{in} && \Omega\setminus\Sigma\\
\bar{u}=\partial_{\nu} \bar{\vartheta} &=0 &&\text{on} &&\partial \Omega\\
[\![\bar{u}]\!]=[\![\bar{\vartheta}]\!]&=0 &&\text{on} &&\Sigma\\
-P_\Sigma[\![\mu_*(\nabla\bar{u} +[\nabla\bar{u}]^{\sf T}]\!]\nu_\Sigma &=G_\tau(u,\vartheta,h)
&&\text{on} &&\Sigma\\
-([\![\mu_*(\nabla\bar{u} +[\nabla\bar{u}]^{\sf T}]\!]\nu_\Sigma|\nu_\Sigma) +[\![\bar{\pi}]\!]
+\sigma \cA_\Sigma \bar{h}&= 0 &&\text{on} &&\Sigma\\
\kappa_*\omega \bar{\vartheta}-d_*\Delta \bar{\vartheta}&=0 &&\text{in} && \Omega\setminus\Sigma\\
l_* \bar{\vartheta} - \sigma A_\Sigma \bar{h} &=G_\theta(\vartheta,h) &&\text{on} &&\Sigma\\
(l_*/\theta_*)( \omega\bar{h}-\bar{u}\cdot\nu_\Sigma) -[\![d_*\partial_\nu \bar{\vartheta}]\!]
&= G_h(u,\vartheta,h)
&&\text{on} &&\Sigma
\end{aligned}
\right.
\end{equation}
by means of  the implicit function theorem, employing Proposition \ref{param}.
Then with $\bar{z}=(\bar{u},\bar{\vartheta},\bar{h})$ and $z=\tilde{z}+\bar{z}$
we obtain  $\bar{z}=\phi(\tilde{z})$, with a $C^1$-function $\phi$ such that $\phi(0)=\phi^\prime(0)=0$. Then $z=\tilde{z}+ \phi(\tilde{z})\in \cPM$, hence $\cPM$ is locally parameterized over $\tilde{Z}$.

To prove surjectivity of this map, for given $(u,\vartheta,h)\in \cPM$,
solve problem \eqref{param1}, where the functions
$(F_d(u,h),G_\tau(u,\vartheta,h),G_\theta(\vartheta,h),  G_h(u,\vartheta,h))$
are now given.
By Proposition~\ref{param} the resulting linear problem has a unique solution $z=(u,\vartheta,h)$.
Let $\tilde{z}=z-\bar{z}$. Then we see that $\bar{z}=\phi(\tilde{z})$, hence the map
$[\tilde{z}\mapsto \tilde{z}+\phi(\tilde{z})]$ is also surjective near $0$.
\end{proof}
\medskip

\noindent
{\bf 3.} \,
Next we derive a similar decomposition for the solutions of
problem \eqref{enonlin-u-theta}.
Let $z_0=(\tilde{z}_0,\phi(\tilde{z}_0))\in\cSM$ be given, and let $z\in\EE(J)$ be the solution
of \eqref{enonlin-u-theta} with initial value $z_0$.
Then we would like to devise a decomposition of $z$ such that
$z(t)=\tilde{z}(t)+\bar{z}(t)$ with
$\tilde{z}(t)\in\tilde{Z}$ for $t\ge 0$.
As before, we use the notation $\tilde{z}=(\tilde{u},\tilde{\vartheta},\tilde{h})$,
and $\bar{z}=(\bar{u},\bar{\vartheta},\bar{h})$. In order to accomplish this we consider the
coupled systems of equations
\begin{equation}
\label{nonlin-param-bar}
\left\{
\begin{aligned}
\omega \bar{u}+\partial_t\bar{u} -\mu_*\Delta\bar{u} +\nabla\bar{\pi}&=F_u(u,\pi,\vartheta,h) &&\text{in} &&\Omega\setminus\Sigma\\
{\rm div}\, \bar{u}&= F_d(u,h) &&\text{in} && \Omega\setminus\Sigma\\
\bar{u}=\partial_{\nu} \bar{\vartheta} &=0 &&\text{on} &&\partial \Omega\\
[\![\bar{u}]\!]=[\![\bar{\vartheta}]\!]&=0 &&\text{on} &&\Sigma\\
-P_\Sigma[\![\mu_*(\nabla\bar{u} +[\nabla\bar{u}]^{\sf T})]\!]\nu_\Sigma &=G_\tau(u,\vartheta,h) &&\text{on} &&\Sigma\\
-\big([\![\mu_*(\nabla\bar{u} +[\nabla\bar{u}]^{\sf T}]\!]\nu_\Sigma|\nu_\Sigma\big) + [\![\bar{\pi}]\!]+\sigma \cA_\Sigma\bar{h}&=G_\nu(u,\vartheta,h)  &&\text{on} &&\Sigma\\
\kappa_*\omega\bar{\vartheta}+\kappa_*\partial_t\bar{\vartheta}-d_*\Delta \bar{\vartheta}
&=F_\theta(u,\vartheta,h)&&\text{in} &&\Omega\setminus\Sigma\\
l_* \bar{\vartheta} - \sigma\cA_\Sigma \bar{h} &=G_\theta(\vartheta,h) &&\text{on} &&\Sigma,\\
(l_*/\theta_*)\omega \bar{h}+ (l_*/\theta_*)(\partial_t \bar{h}-\bar{u}\cdot\nu_\Sigma)
-[\![d_*\partial_\nu \bar{\vartheta}]\!]&=G_h(u,\vartheta,h) &&\text{on} &&\Sigma \\
\bar{z}(0)&=\phi(\tilde{z}_0) && &&
\end{aligned}
\right.
\end{equation}
and
\begin{equation}
\label{nonlin-param-tilde}
\left\{
\begin{aligned}
 \partial_t \tilde{u}-\mu_*\Delta\tilde{u} +\nabla\tilde{\pi}&=\omega \bar{u} &&\text{in} &&\Omega\setminus\Sigma\\
{\rm div}\, \tilde{u}&= 0 &&\text{in} &&\Omega\setminus\Sigma\\
\tilde{u}=\partial_{\nu} \tilde{\vartheta} &=0 &&\text{on} && \partial \Omega\\
[\![\tilde{u}]\!]=[\![\tilde{\vartheta}]\!]&=0 &&\text{on} && \Sigma\\
-P_\Sigma[\![\mu_*(\nabla\tilde{u} +[\nabla\tilde{u}]^{\sf T}]\!]\nu_\Sigma &=0 &&\text{on} &&\Sigma\\
-\big([\![\mu_*(\nabla\tilde{u} +[\nabla\tilde{u}]^{\sf T}]\!]\nu_\Sigma|\nu_\Sigma\big)
+[\![\tilde{\pi}]\!]+\sigma \cA_\Sigma \tilde{h}
&=0 &&\text{on} && \Sigma\\
\kappa_*\partial_t\tilde{\vartheta}-d_*\Delta\tilde{\vartheta}&=\kappa_*\omega\bar{\vartheta} &&\text{in} &&\Omega\setminus\Sigma\\
l_* \tilde{\vartheta} - \sigma \cA_\Sigma \tilde{h}&= 0 &&\text{on} && \Sigma,\\
 (l_*/\theta_*)(\partial_t \tilde{h}-\tilde{u}\cdot\nu_\Sigma)
 -[\![d_*\partial_\nu \tilde{\vartheta}]\!]&=(l_*/\theta_*)\omega\bar{h} &&\text{on} &&\Sigma\\
\tilde{z}(0)&=\tilde{z}_0.
&& &&
\end{aligned}
\right.
\end{equation}
Equations \eqref{nonlin-param-bar}--\eqref{nonlin-param-tilde}
can be rewritten in the more condensed form
\begin{equation}
\label{condensed}
\begin{aligned}
\LL_{\omega} \bar{z}&= N(\tilde{z}+\bar{z}), \quad &&\bar{z}(0)=\phi(\tilde{z}_0)\\
\dot{\tilde{z}}+L\tilde{z}&=\omega \bar{z},
\quad &&\tilde{z}(0)=\tilde{z}_0,
\end{aligned}
\end{equation}
where we use the abbreviation $\mathbb{L}_{\omega}$ to denote the linear operator on
the left hand side of \eqref{nonlin-param-bar}, and $N$ to denote the nonlinearities
on the right hand side of \eqref{nonlin-param-bar}, respectively.

\begin{remark}
As ${\rm div}\,\bar{u}$ is in general nonzero, $\bar{z}$ does not belong to the base space $X_0$. However, this defect can be easily overcome, replacing
$\bar{u}$ in \eqref{nonlin-param-tilde} by its Helmholtz-projection in $\Omega$. This only changes the pressure $\tilde{\pi}$ by a jump-free part, but the velocity $\tilde{u}$, and hence also $\tilde{z}$, are unchanged.
\end{remark}
\medskip

\noindent
{\bf 4.}\, For the purpose of proving the stability result it turns out to be more convenient
to modify the decomposition of $z$ derived in the previous step in the following way.
Suppose $z_\infty=\tilde{z}_\infty+\phi(\tilde{z}_\infty)\in\cSM$ is an equilibrium of
\eqref{enonlin-u-theta} which is close to the fixed equilibrium $z_*=(0,\theta_*,\Gamma_*)$.
Then we decompose the solution $z$
of \eqref{enonlin-u-theta} as $z(t)=z_\infty+\tilde{z}(t)+\bar{z}(t)$,
 where as above $\tilde{z}(t)\in\tilde{Z}$.
Clearly, $\LL z_\infty=N(z_\infty)$,
and we are lead to consider the following coupled system for the pair $(\tilde{z},\bar{z})$
\begin{equation}
\label{condensed-inf}
\begin{aligned}
\LL_{\omega} \bar{z}&= N(z_\infty\! +\tilde{z}+\bar{z})-N(z_\infty),
\quad &&\bar{z}(0)=\phi(\tilde z_0)-\phi(\tilde{z}_\infty), \\
\dot{\tilde{z}}+L\tilde{z}
&=\omega \bar{z},
\quad &&\tilde{z}(0)=\tilde{z}_0-\tilde{z}_\infty.
\end{aligned}
\end{equation}
\medskip\\
\noindent
The abstract problem \eqref{condensed-inf}
can be treated in the same way as in the proof of
Theorem 5.2 in Pr\"uss, Simonett and Zacher \cite{PSZ10}.
This implies the following result.
\begin{theorem}
\label{stability} Let $p>n+2$, $\sigma>0$ and $l_*\neq0$,
and suppose $\psi_i\in C^3(0,\infty)$, $\mu_i,d_i\in C^2(0,\infty)$ are such that
$$\kappa_i(s)=-s\psi_i^{\prime\prime}(s)>0,\quad \mu_i(s)>0,\quad  d_i(s)>0,\quad s\in(0,\infty),\; j=1,2.$$
Let the function $\varphi$ be as in \eqref{energy-equilibrium}. Then in the topology of the state manifold $\cSM$ we have:
\begin{itemize}
\item[(a)]  $(0,\theta_*,\Gamma_*)\in\cE$ is stable if $\Gamma_*$ is connected and $\varphi^\prime(\theta_*)<0$.\\
Any solution starting in a neighborhood of such a stable equilibrium converges to another stable equilibrium exponentially fast.
\vspace{1mm}
\item[(b)] $(0,\theta_*,\Gamma_*)\in\cE$
is unstable if $\Gamma_*$ is disconnected or $\varphi^\prime(\theta_*)>0$.\\
Any solution starting and staying in a neighborhood of such an unstable equilibrium converges to another unstable equilibrium exponentially fast.
\end{itemize}
\end{theorem}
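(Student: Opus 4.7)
The plan is to invoke the generalized principle of linearized stability for quasilinear parabolic problems admitting a $C^1$-manifold of equilibria developed in Pr\"uss, Simonett and Zacher \cite{PSZ09}, adapting the blueprint of Theorem~5.2 in \cite{PSZ10}. By Theorem~\ref{linstabtheorem} the hypothesis in (a) is equivalent to $s<0$ with $\Gamma_*$ connected, making $e_*$ normally stable: $\lambda=0$ is a semi-simple eigenvalue of $L$ with eigenspace $N(L)\cong T_{e_*}\cE$ of dimension $mn+1$, and $\sigma(L)\setminus\{0\}\subset\{{\rm Re}\,\lambda>0\}$. The hypothesis in (b) gives either $s>0$ or $\Gamma_*$ disconnected, and in the normally hyperbolic regime $s\neq 0$ the operator $L$ has a nontrivial finite-dimensional unstable subspace while $0$ remains semi-simple of multiplicity $mn+1$.

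First I reduce \eqref{enonlin-u-theta} to an autonomous quasilinear evolution equation on the tangent space $\tilde Z$. Starting from the chart $z_0=\tilde z_0+\phi(\tilde z_0)$ of Theorem~\ref{param-phi} and fixing a candidate limit equilibrium $z_\infty\in\cE$ near $z_*$, I work with the coupled system \eqref{condensed-inf}. For $\omega>0$ sufficiently large, Proposition~\ref{param} combined with the implicit function theorem solves the first equation in \eqref{condensed-inf} for $\bar z$ as a $C^1$-function of $(\tilde z,z_\infty)$ that vanishes together with its $\tilde z$-derivative at the reference equilibrium. Substituting this into the second equation of \eqref{condensed-inf} yields a closed-form equation
\begin{equation*}
\dot{\tilde z}+L\tilde z=R(\tilde z,z_\infty),\qquad \tilde z(0)=\tilde z_0-\tilde z_\infty,
\end{equation*}
with $R$ a $C^1$ nonlinearity of quadratic vanishing at $(\tilde z,z_\infty)=(0,z_*)$. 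Since $-L$ generates a compact analytic $C_0$-semigroup on $X_0$ with maximal $L_p$-regularity, the abstract setting of \cite{PSZ09} is met.

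Next I split $\tilde Z$ via the spectral projections of $L$ associated with the sets $\{0\}$, $\{{\rm Re}\,\lambda<0\}$, $\{{\rm Re}\,\lambda>0\}$ into central, unstable, and stable subspaces. In case (a) the unstable part vanishes; the spectral gap between the center and the stable part gives exponential decay of the stable component, while the center part lies tangent to the finite-dimensional manifold $\cE$. An implicit function theorem applied within the $(mn+1)$-parameter family $\cE$ selects $z_\infty$ so that the central component of the perturbation is absorbed by the drift along $\cE$; a fixed-point argument in exponentially weighted maximal-regularity spaces on $\R_+$ then yields a global solution with $\tilde z(t)\to\tilde z_\infty$, and correspondingly $\bar z(t)\to\phi(\tilde z_\infty)$, exponentially fast in the topology of $\cSM$. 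In case (b) the same machinery furnishes a center-stable manifold of codimension $\dim\tilde Z^u$; solutions that remain in a neighborhood of $e_*$ must lie on it and therefore converge exponentially to a nearby equilibrium in $\cE$, which is itself unstable by the persistence statement in Theorem~\ref{linstabtheorem}(iii), while solutions off the center-stable manifold leave the neighborhood, establishing instability.

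The principal obstacle is the coupling of the pressure (through the weak elliptic transmission problem) with the selection of the asymptotic equilibrium $z_\infty$ from the $(mn+1)$-parameter family $\cE$; this selection must be carried out simultaneously with the fixed point for $\tilde z$ while respecting the maximal-regularity spaces. This is addressed exactly as in \cite[Theorem~5.2]{PSZ10}: Proposition~\ref{param} guarantees uniform solvability of the elliptic block, and Theorem~\ref{linstabtheorem}(v) provides a smooth $C^1$-parameterization of $\cE$ near $z_*$, so the coupled fixed-point problem is resolved by standard contraction-mapping arguments in the appropriate weighted norms. With these ingredients in place, the argument is structurally identical to that of the Stefan case.
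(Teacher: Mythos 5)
Your proposal follows essentially the same route as the paper: the paper likewise parameterizes $\cSM$ over $\tilde Z$ via Theorem~\ref{param-phi}, decomposes $z=z_\infty+\tilde z+\bar z$ through the coupled system \eqref{condensed-inf}, and then settles both the normally stable case (a) and the normally hyperbolic case (b) by the generalized principle of linearized stability exactly as in \cite[Theorem 5.2]{PSZ10}, using Theorem~\ref{linstabtheorem} to identify the spectral configuration. The only slight imprecision is that the $\bar z$-equation in \eqref{condensed-inf} is parabolic (it contains $\partial_t$), so it is handled by maximal $L_p$-regularity of $\LL_\omega$ in the weighted spaces rather than by Proposition~\ref{param} alone, but this does not affect the correctness of the argument.
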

\section{Global Existence and Convergence}

We have seen in  \cite{PSSS11} that the negative total entropy, see \eqref{entropy},
is a strict Lyapunov functional.
Therefore the {\em limit sets} of solutions in the state manifold $\cSM$ are contained in the manifold $\cE\subset \cSM$ of equilibria.

There are several obstructions against global existence:
\begin{itemize}
\item
{\em Regularity}: the norms of either $u(t)$, $\theta(t)$, $\Gamma(t)$,
or  $[\![d(\theta(t))\partial_\nu \theta(t)]\!]$ may  become unbounded;
\item
{\em Well-posedness }: the  condition $l(\theta)\neq0$
may be violated; or the temperature may become $0$;
\item
{\em Geometry}: the topology of the interface may change;\\
    or the interface may touch the boundary of $\Omega$;\\
    or a part of the interface may contract to a point.
\end{itemize}
Recall that the compatibility conditions
\begin{align*}
&{\rm div}\, u(t)=0 \mbox{ in } \Omega\setminus\Gamma(t),
\quad u(t)=\partial_\nu\theta(t)=0 \mbox{ on } \partial\Omega,\\
&[\![u(t)]\!]=[\![\theta]\!]=P_\Gamma[\![\mu(\theta (t)) D(t)]\!]=0,
\quad[\![\psi(\theta(t))]\!]+\sigma H_\Gamma(t)=0 \mbox{ on }\Gamma(t),
\end{align*}
are preserved by the semiflow.

Let $(u,\theta,\Gamma)$ be a solution in the state manifold $\cSM$ with maximal interval of existence $[0,t_*)$. By the
{\em uniform ball condition} we mean the existence of a radius $r_0>0$ such that for each $t\in[0,t_*)$,
at each point $x\in\Gamma(t)$ there exists centers $x_i\in \Omega_i(t)$ such that
$B_{r_0}(x_i)\subset \Omega_i$ and $\Gamma(t)\cap \bar{B}_{r_0}(x_i)=\{x\}$, $i=1,2$. Note that this condition
bounds the curvature of $\Gamma(t)$, prevents parts of it to shrink to points, to touch the outer
boundary $\partial \Omega$, and to undergo topological changes.

With this property, combining the local semiflow for (\ref{i2pp}) with
the Lyapunov functional and compactness we obtain the following result.
\begin{theorem}
\label{Qual} Let $p>n+2$, $\sigma>0$, and suppose $\psi_i\in C^3(0,\infty)$, $\mu_i,d_i\in C^2(0,\infty)$ such that
$$\kappa_i(s)=-s\psi_i^{\prime\prime}(s)>0,\quad \mu_i(s)>0,\quad  d_i(s)>0,\quad s\in(0,\infty),\; i=1,2.$$
Suppose that $(u,\theta,\Gamma)$ is a solution of
(\ref{i2pp}) in the state manifold $\cSM$ on its maximal time interval $[0,t_*)$.
Assume there is constant $M>0$  such that the  following conditions hold on $[0,t_*)$:
\begin{itemize}
\item[(i)]  $|u(t)|_{[W^{2-2/p}_p]^n},|\theta(t)|_{W^{2-2/p}_p},|\Gamma(t)|_{W^{4-3/p}_p},
|[\![d(\theta(t))\partial_\nu \theta(t)]\!]|_{W^{2-6/p}_p}\leq M$;
\vspace{2mm}
\item[(ii)]  $|l(\theta(t))|, \theta(t)\geq 1/M$;
\vspace{2mm}
\item[(iii)] $\Gamma(t)$ satisfies the uniform ball condition.
\end{itemize}
Then $t_*=\infty$, i.e.\ the solution exists globally, and its limit set $\omega(u,\theta,\Gamma)\subset\cE$ is non-empty. If further $(0,\theta_\infty,\Gamma_\infty)\in\omega_+(u,\theta,\Gamma)$ with $\Gamma_\infty$ connected and $\varphi^\prime(\theta_\infty)<0$, then the solution converges in $\cSM$ to this equilibrium.

Conversely, if $(u(t),\theta(t),\Gamma(t))$ is a global solution in $\cSM$ which converges to an equilibrium $(0,\theta_*,\Gamma_*)\in\cE$  in $\cSM$ as $t\to\infty$, and $l(\theta_*)\neq0$,
then (i)--(iii) hold.
\end{theorem}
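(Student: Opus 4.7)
The proof splits naturally into four pieces: global existence, relative compactness of the orbit, identification/convergence to the specific limiting equilibrium, and the converse direction.

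\textbf{Step 1 (global existence).} I argue by contradiction, supposing $t_*<\infty$. The uniform ball condition (iii), together with the $W^{4-3/p}_p$-bound in (i), lets me parameterize all interfaces $\Gamma(t)$, $t\in[0,t_*)$, as graphs of height functions $h(t)$ over a single fixed reference manifold $\Sigma\in\cMH^2(\Omega)$ via a Hanzawa transformation, with $h(t)$ bounded in $W^{4-3/p}_p(\Sigma)$. Pulling \eqref{i2pp} back to $\Sigma$ yields a problem of the form \eqref{enonlin-u-theta}, whose coefficients are uniformly regular in $t$ thanks to (i)--(ii): in particular $\theta\geq 1/M$ keeps $\kappa,\mu,d,\psi$ uniformly smooth and bounded away from degeneracy, and $|l(\theta)|\geq 1/M$ secures the well-posedness condition uniformly. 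Using the maximal $L_p$-regularity of $\LL$ on a short interval of fixed length (independent of the starting time), depending only on $M$ and $r_0$, I pick $t_k\to t_*$ along which $(u(t_k),\theta(t_k),h(t_k))$ is Cauchy in the trace space $\gamma\EE$ (by compactness of the embedding of the $\EE(J)$-norm bounds into $\gamma\EE$), converging to an initial state in $\cSM$, the compatibility conditions passing to the limit. Applying \thmref{wellposed} to this new initial datum yields a solution on $[t_k,t_k+\tau]$ with $\tau>0$ independent of $k$, and uniqueness allows me to concatenate, extending past $t_*$ and contradicting maximality.

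\textbf{Step 2 (limit set in $\cE$).} With $t_*=\infty$, the bounds in (i) together with the compact embeddings $W^{2-2/p}_p(\Omega\setminus\Gamma(t))\hookrightarrow C(\bar\Omega)$ and $W^{4-3/p}_p(\Sigma)\hookrightarrow C^2(\Sigma)$ (since $p>n+2$) show that the forward orbit is relatively compact in $\cSM$, so $\omega_+(u,\theta,\Gamma)$ is non-empty. The negative total entropy $-\Phi$ defined in \eqref{entropy} is a strict Lyapunov functional for the semiflow on $\cSM$ by \cite{PSSS11}, and its critical points on the energy level set coincide with $\cE$. A standard LaSalle-type argument, using that the semiflow is continuous on $\cSM$ and the orbit is precompact, now forces $\omega_+(u,\theta,\Gamma)\subset\cE$.

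\textbf{Step 3 (convergence).} Suppose $e_\infty=(0,\theta_\infty,\Gamma_\infty)\in\omega_+(u,\theta,\Gamma)$ with $\Gamma_\infty$ connected and $\varphi^\prime(\theta_\infty)<0$. By \thmref{linstabtheorem}(ii) this is equivalent to $s(e_\infty)<0$, so $e_\infty$ is normally stable. \thmref{stability}(a) supplies a neighborhood $U$ of $e_\infty$ in $\cSM$ such that every solution entering $U$ converges (exponentially) to some stable equilibrium. Since $e_\infty$ is in the limit set, the orbit returns to $U$ at arbitrarily large times; picking any such entry time and invoking \thmref{stability}(a), the orbit converges to a stable equilibrium $\tilde e$. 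But then $\omega_+(u,\theta,\Gamma)=\{\tilde e\}$, forcing $\tilde e=e_\infty$.

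\textbf{Step 4 (converse) and main obstacle.} If $(u(t),\theta(t),\Gamma(t))\to(0,\theta_*,\Gamma_*)$ in $\cSM$ and $l(\theta_*)\neq 0$, then the state-manifold norm of the solution is a continuous function of $t\in[0,\infty)$ with a finite limit, hence bounded, giving (i). By continuity on $\bar\Omega$, $\theta(t)\to\theta_*>0$ and $l(\theta(t))\to l(\theta_*)\neq 0$ uniformly, so (ii) holds for $t$ large, and the local strict positivity on $[0,T]$ (from the well-posedness assumption at $t=0$ and continuity) covers the remainder. For (iii), convergence of $\Gamma(t)$ in $W^{4-3/p}_p$ implies $C^2$-convergence to $\Gamma_*$, which itself satisfies the uniform ball condition with some $r_0>0$; a small perturbation of $\Gamma_*$ still satisfies the uniform ball condition with $r_0/2$, and again continuity on $[0,T]$ handles the rest. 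The real work sits in Step 1: making the Hanzawa transformation uniformly admissible across $[0,t_*)$ and extracting a uniform lower bound on the local existence time from \thmref{wellposed} that depends only on $M$ and $r_0$. This is precisely what (i)--(iii) are designed to deliver, and once achieved the rest of the argument is essentially formal.
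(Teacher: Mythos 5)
Your overall architecture matches the paper's: the paper disposes of global existence and relative compactness of the orbit by citing the proof of Theorem 8.2 in \cite{PSSS11}, then uses the negative total entropy as a strict Lyapunov functional to place $\omega_+(u,\theta,\Gamma)$ in $\cE$, invokes Theorem \ref{stability} for convergence, and settles the converse by a compactness/continuity argument. However, there is a genuine gap in your compactness claims. In Step 2 you assert that the bounds in (i), ``together with the compact embeddings $W^{2-2/p}_p\hookrightarrow C(\bar\Omega)$ and $W^{4-3/p}_p\hookrightarrow C^2$,'' give relative compactness of the forward orbit \emph{in $\cSM$}. That is a non sequitur: boundedness in $W^{2-2/p}_p$ and $W^{4-3/p}_p$ plus compact embeddings yields precompactness only in the \emph{weaker} topologies ($C(\bar\Omega)$, $C^2$), not in the state-manifold topology, which is built on exactly the norms in which you only have boundedness. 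The point of the argument in \cite{PSSS11} is parabolic smoothing: using time-weighted maximal $L_p$-regularity one shows that for $t\ge\delta>0$ the orbit is bounded in strictly higher regularity classes, and only then does compact embedding give relative compactness in $\cSM$. This is not cosmetic: the LaSalle argument, the assertion that $\omega$-limit points are attained in the $\cSM$-topology, and the applicability of Theorem \ref{stability} (the orbit must enter an $\cSM$-neighborhood of $e_\infty$) all hinge on compactness in $\cSM$, not in $C(\bar\Omega)\times C^2$.

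A secondary flaw sits in Step 1: the uniform ball condition and the $W^{4-3/p}_p$-bound do \emph{not} allow you to write all interfaces $\Gamma(t)$, $t\in[0,t_*)$, as Hanzawa graphs over a \emph{single} fixed reference surface $\Sigma$ --- the interface may translate or deform so that it is no longer $C^1$-close to any one $\Sigma$. The repair is standard but should be stated: by (i) and (iii) the family $\{\Gamma(t)\}$ is relatively compact in a slightly weaker topology, so it can be covered by finitely many reference manifolds, and the uniform lower bound on the local existence time from Theorem \ref{wellposed} is then obtained chart-by-chart (with constants depending only on $M$ and $r_0$). With these two repairs --- the smoothing-based compactness and the finitely-many-charts covering --- your Steps 1--4 reproduce the argument the paper delegates to \cite{PSSS11}; Steps 3 and 4 are essentially correct as written.
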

\begin{proof}
Under the assumptions (i)--(iii)
it is shown in the proof of \cite[Theorem 8.2]{PSSS11} that
$t_*=\infty$ and that the orbit $(u,\theta,\Gamma)(\R_+)\subset\cPM$ is relatively compact.
The negative total entropy is a strict Lyapunov functional, hence the limit set $\omega(u,\theta,\Gamma)\subset \cSM$ of a solution is contained in the set $\cE$ of equilibria.
By compactness, $\omega_+(u,\theta,\Gamma)\subset \cPM$ is non-empty,
hence the solution comes close to $\cE$, and stays there.
Then we may apply the convergence result Theorem \ref{stability}.
The converse follows by a compactness argument.
\end{proof}
\bigskip
\noindent
{\bf Remarks} \\
(i) \, We believe that in Theorem \ref{Qual} the assumption that $\Gamma_\infty$ is connected can be dropped and $\varphi^\prime(\theta_\infty)<0$  can be replaced by $\varphi^\prime(\theta_\infty)\neq0$. However, a proof of this requires much more technical efforts, we refrain from these, here.

\medskip

\noindent
(ii) \,
We cannot show that the temperature stays positive if it is initially since we did not make any assumptions on the behavior of the functions $\mu_j,d_j,\psi_j$ near $0$.
{\small
}

\end{document}